\crefname{defn}{Definition}{Definitions} 
\crefname{restatable}{Theorem}{Theorems} 
\theoremstyle{definition}
\newtheorem{dummy}{Theorem}[section] 
\newtheorem{theorem}[dummy]{Theorem}
\newtheorem{defn}[dummy]{Definition}
\newtheorem{corollary}[dummy]{Corollary}
\newtheorem{lemma}[dummy]{Lemma}
\newtheorem{remark}[dummy]{Remark}
\newcommand{\inv}{^{-1}}
\newcommand{\Hom}{\text{Hom}}
\newcommand{\End}{\text{End}}
\newcommand{\eval}{\text{eval}}
\title{Matrix bundles and operator algebras over a finitely bordered Riemann surface}
\author{Kathryn McCormick}
\begin{document}

\maketitle
\begin{abstract} This note presents an analysis of a class of operator algebras constructed as cross-sectional algebras of flat holomorphic matrix bundles over a finitely bordered Riemann surface. These algebras are partly inspired by the bundle shifts of Abrahamse and Douglas. The first objective is to understand the boundary representations of the containing $C^*$-algebra, i.e.~Arveson's noncommutative Choquet boundary for each of our operator algebras. The boundary representations of our operator algebras for their containing $C^*$-algebras are calculated, and it is shown that they correspond to evaluations on the boundary of the Riemann surface. Secondly, we show that our algebras are Azumaya algebras, the algebraic analogues of $n$-homogeneous $C^*$-algebras.
	\end{abstract}

\begin{section}{Introduction}

The motivation for this work comes from problems in the structure theory of nonselfadjoint operator algebras. We are first and foremost concerned with boundary representations, which were introduced in 1969 by Arveson \cite{Arv1969a} as a noncommutative Choquet boundary for operator algebras. Since then, progress has been made toward calculating such representations; for a (non-exhaustive) sample of such results, see \cite{Hopenwasser1973}, \cite{Muhly1998a}, and \cite{DK2010}. Secondly, we were drawn to the theory bundle shifts. Initiated by Abrahamse and Douglas in \cite{Abrahamse1976}, bundle shifts are a class of subnormal operators for which properties such as invariant subspaces and generating $C^*$- and $W^*$- algebras can be determined by the shift operator's associated vector bundle. Such a shift operator acts on a Hilbert space of cross-sections via multiplication by holomorphic functions on the base space. In this note we present a class of operator algebras that draws on technology similar to that of bundle shifts, but features matrix bundles. Here, the cross-sections are matrix-valued and themselves are the operators. Thus while our matrix bundles have the holomorphic properties of the vector bundles associated to the bundle shift, they are more structurally tied to $n$-homogeneous $C^*$-algebras as studied, for example, in  \cite{Tomiyama1961}.

Throughout, $\overline{R}$ will denote a bordered Riemann surface as in \cite[II.3A]{AhlforsSario60}, and $R$ will denote its interior. We will assume that $\overline{R}$ is compact, and that the boundary $\partial R$ is made up of finitely many disjoint, analytic curves. Further, we will require that $R$ is hyperbolic, i.e.~the universal covering space of $R$ is the open unit disk $D$. Let $\pi_1(R)$ be the fundamental group of $R$, viewed as the group of covering transformations of $D$. As we will describe in \cref{Preliminaries}, we may view $R$ embedded in its double, so that the free and proper action of $\pi_1(R)$ on $D$ extends to a free and proper action of $\pi_1(R)$ on the universal cover of $\overline{R}$, $\widetilde{D}$. Thus the quotient map, $\pi : \widetilde{D} \to \overline{R}$, maps $D$ onto $R$ and $\widetilde{D} \setminus D$ onto $\partial R$.
 
View the projective unitary group, $PU_n(\mathbb{C})$, as the $*$-automorphism group of the $n \times n$ matrices $M_n(\mathbb{C})$, and take a representation $\rho : \pi_1(R) \to PU_n(\mathbb{C})$. Then we may construct a flat matrix $PU_n(\mathbb{C})$-bundle, $\mathfrak{E}(\overline{R})=\mathfrak{E}_\rho(\overline{R}):=(\widetilde{D} \times M_n (\mathbb{C}) )/\pi_1(R)$, where $\pi_1(R)$ acts via the formula $(x,A)\cdot g=(x \cdot g, \rho(g\inv)A)$. As we will discuss in \cref{Preliminaries}, $\mathfrak{E}(\overline{R})$ is a topological coordinate matrix $PU_n(\mathbb{C})$-bundle over $\overline{R}$ whose restriction to $R$ is a holomorphic matrix bundle. For $T \subseteq \overline{R}$, we write $\mathfrak{E}(T)$ for the restriction to the subset $T$. 

Our two primary objects of study are the $C^*$-algebra of continuous cross sections of $\mathfrak{E}(\overline{R})$, $\Gamma_c(\overline{R},\mathfrak{E}(\overline{R}))$, and the subalgebra of the continuous cross sections that are holomorphic on the interior $R$, $\Gamma_h(\overline{R},\mathfrak{E}(\overline{R}))$. The two main results of this paper develop the structure theory of the algebra $\Gamma_h(\overline{R},\mathfrak{E}(\overline{R}))$. For the first, note that $\Gamma_c(\overline{R}, \mathfrak{E}(\overline{R}))$ is an $n$-homogeneous $C^*$-algebra with spectrum $\overline{R}$, and thus all its irreducible  representations are given by composing evaluation at points $t \in \overline{R}$ with a $*$-isomorphism of the fibre over $t$ with $M_n(\mathbb{C})$. Define $\eval_t$ to be the evaluation at $t$ composed with one such choice of isomorphism.

\begin{restatable}{theorem}{BoundaryRepsAreEval} \label{BoundaryRepsAreEval} The boundary representations (in the sense of Arveson \cite{Arv1969a}) of $\Gamma_c(\overline{R},\mathfrak{E}(\overline{R}))$ for $\Gamma_h(\overline{R},\mathfrak{E}(\overline{R}))$ are precisely the evaluation maps $\eval_t$, where $t \in \partial R$.
	\end{restatable}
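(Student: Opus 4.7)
The plan is to exploit the fact that $B := \Gamma_c(\overline{R}, \mathfrak{E}(\overline{R}))$ is $n$-homogeneous with spectrum $\overline{R}$, so its irreducible representations are exhausted by the evaluation maps $\{\eval_t : t \in \overline{R}\}$. Since a boundary representation is by definition an irreducible representation of $B$ whose restriction to $A := \Gamma_h(\overline{R}, \mathfrak{E}(\overline{R}))$ has the unique completely positive extension property, the problem reduces to determining precisely the set of $t \in \overline{R}$ for which $\eval_t|_A$ has a unique UCP extension to $B$. I will verify (i) for each $t \in R$, $\eval_t|_A$ admits a UCP extension to $B$ different from $\eval_t$, and (ii) for each $t \in \partial R$, $\eval_t$ is the only UCP extension of $\eval_t|_A$.

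For (i), fix $t \in R$ and choose a coordinate disk $D_t$ around $t$ with $\overline{D_t} \subset R$ small enough that $\mathfrak{E}$ admits a holomorphic trivialization $\tau : \mathfrak{E}|_{\overline{D_t}} \xrightarrow{\sim} \overline{D_t} \times M_n(\mathbb{C})$ whose fibre at $t$ is the identification used to define $\eval_t$. Define $\Phi : B \to M_n(\mathbb{C})$ by $\Phi(b) := \int_{\partial D_t} \tau(b(s))\, d\mu(s)$, where $\mu$ is normalized arc length on $\partial D_t$. As the composition of the $*$-homomorphism $b \mapsto \tau(b)|_{\partial D_t}$ with integration against a probability measure, $\Phi$ is unital and completely positive. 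For $a \in A$, each matrix entry of $\tau(a)$ is holomorphic, hence harmonic, on $D_t$, so the mean value property yields $\Phi(a) = \tau(a(t)) = \eval_t(a)$. To distinguish $\Phi$ from $\eval_t$, take a continuous bump section $b \in B$ supported inside $D_t$ with $b|_{\partial D_t} = 0$ and $b(t) \neq 0$: then $\Phi(b) = 0$ while $\eval_t(b) \neq 0$, so $\eval_t$ fails the unique extension property.

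For (ii), fix $t \in \partial R$ and let $\Psi : B \to M_n(\mathbb{C})$ be any UCP extension of $\eval_t|_A$. Because $\overline{R}$ is finitely bordered with analytic boundary, every boundary point is a peak point for the scalar algebra $A(\overline{R})$, producing $h_t \in A(\overline{R})$ with $h_t(t) = 1$, $|h_t(s)| < 1$ for $s \neq t$, and $\|h_t\|_\infty = 1$. The central section $h_t I \in A$ satisfies $\Psi(h_t I) = \eval_t(h_t I) = I$; the Schwarz inequality yields $\Psi(|h_t|^2 I) \geq I$, while $|h_t|^2 \leq 1$ forces $\Psi(|h_t|^2 I) \leq I$, so $h_t I$ lies in the multiplicative domain of $\Psi$, and hence $\Psi(h_t^k I \cdot b) = \Psi(b)$ for every $b \in B$ and every $k \geq 1$. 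When $b(t) = 0$, continuity of $b$ together with strict peaking of $h_t$ (a uniform bound $|h_t| \leq c < 1$ away from any neighborhood of $t$) give $\|h_t^k b\| \to 0$, so $\Psi(b) = 0$. Therefore $\Psi$ annihilates the kernel of $\eval_t$ and factors as $\Psi = \Psi_0 \circ \eval_t$ for some UCP map $\Psi_0 : M_n(\mathbb{C}) \to M_n(\mathbb{C})$. The identity $\Psi_0(\eval_t(a)) = \eval_t(a)$ for $a \in A$, combined with the surjectivity $\eval_t(A) = M_n(\mathbb{C})$, forces $\Psi_0 = \mathrm{id}$ and hence $\Psi = \eval_t$.

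The principal technical obstacle is the surjectivity of $\eval_t : A \to \mathfrak{E}_t$ at boundary points. The plan is to use that the flat holomorphic structure on $\mathfrak{E}|_R$ extends across the analytic boundary---via the enlargement of $\pi$ to $\widetilde{D}$ already present in the preliminaries---so that $\mathfrak{E}$ is in fact holomorphic over a slight open enlargement of $\overline{R}$; a Cartan-B / Runge-type theorem on this ambient Stein Riemann surface then produces global holomorphic sections realizing any prescribed fibre value at $t \in \partial R$. Handling this extension compatibly with the flat $PU_n(\mathbb{C})$-structure of $\mathfrak{E}$, and confirming the peak-point property for $A(\overline{R})$ at every boundary point, constitutes the bulk of the technical preparation.
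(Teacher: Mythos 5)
Your argument is correct in outline, but it takes a genuinely different route from the paper on both halves. For the positive half ($t \in \partial R$), the paper uses the scalar peak function only to exhibit $\eval_t$ as a \emph{peaking} representation and then invokes Kleski's theorem (\cref{KleskiThm}), which rests on the general existence theory for boundary representations; you instead run a direct unique-extension argument: the peak section $h_t I$ lies in the multiplicative domain of any UCP extension $\Psi$, forcing $\Psi$ to annihilate $\ker \eval_t$, and fibrewise surjectivity of the continuous holomorphic sections then pins $\Psi$ down to $\eval_t$. That surjectivity --- your flagged ``principal technical obstacle'' --- is exactly \cref{ExistenceOfSections}, obtained by extending the bundle to $\breve{R}$ (\cref{ExtensionOfBundleToRegularRegion}) and applying Grauert--R\"ohrl (\cref{GrauertsTheorem}), i.e.\ the same route you sketch, so this is a deferred-but-available lemma rather than a gap; likewise your peak-point claim is \cref{PeakPointsForCenterAreOnBoundary} (Ahern--Sarason plus Bishop--de Leeuw). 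For the negative half ($t \in R$), the paper restricts completely isometrically to $\partial R$ (\cref{RestrictionCompletelyIsom}, via subharmonicity and the maximum modulus principle) and transfers boundary representations through Arveson's \cref{BoundaryRepCorrespondence}; you instead exhibit an explicit second UCP extension of the restriction of $\eval_t$ to the subalgebra by averaging over a small circle in a local holomorphic trivialization and separate it from $\eval_t$ with a bump section. Your version is more elementary and self-contained (no Kleski, no \cref{BoundaryRepCorrespondence}), at the cost of doing the dilation-theoretic work by hand; the paper's version is shorter given those imported theorems and simultaneously sets up the $C^*$-envelope computation. Two small points to tidy: take $\mu$ to be harmonic measure at $t$ for $D_t$ (normalized angular measure in a coordinate centered at $t$) so the mean-value step is literal, and when you factor $\Psi = \Psi_0 \circ \eval_t$, justify that $\Psi_0$ is UCP by noting that positive matrices over $B/\ker\eval_t \simeq M_n(\mathbb{C})$ lift to positive matrices over $B$.
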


As a result, we have:

\begin{corollary} The $C^*$-envelope of $\Gamma_h(\overline{R},\mathfrak{E}(\overline{R}))$ is $\Gamma_c(\partial R,\mathfrak{E}(\partial R))$.
	\end{corollary}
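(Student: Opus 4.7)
The plan is to derive this corollary from \cref{BoundaryRepsAreEval} by invoking Arveson's characterization of the $C^*$-envelope in terms of boundary representations. Recall that for an operator algebra $A$ sitting inside a $C^*$-algebra $B$ that it generates, the $C^*$-envelope is isomorphic to $B/J$, where the Shilov boundary ideal $J$ is the intersection of the kernels of all boundary representations of $B$ for $A$. Applied to our situation with $A = \Gamma_h(\overline{R},\mathfrak{E}(\overline{R}))$ inside $B = \Gamma_c(\overline{R},\mathfrak{E}(\overline{R}))$, \cref{BoundaryRepsAreEval} identifies these boundary representations as exactly the evaluations $\eval_t$ at points $t \in \partial R$.

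The first step is therefore to compute the Shilov ideal explicitly:
\[
J \;=\; \bigcap_{t \in \partial R} \ker(\eval_t) \;=\; \bigl\{\, f \in \Gamma_c(\overline{R},\mathfrak{E}(\overline{R})) \,:\, f(t)=0 \text{ for all } t \in \partial R \,\bigr\}.
\]
(The particular choice of fibrewise $*$-isomorphism used to define each $\eval_t$ is irrelevant for the kernel, which depends only on the values of $f$ on $\partial R$.) The second step is to consider the restriction map $\mathrm{res}:\Gamma_c(\overline{R},\mathfrak{E}(\overline{R})) \to \Gamma_c(\partial R,\mathfrak{E}(\partial R))$. It is a $*$-homomorphism, its kernel is exactly $J$, and so it descends to an injective $*$-homomorphism
\[
\overline{\mathrm{res}} : \Gamma_c(\overline{R},\mathfrak{E}(\overline{R}))/J \longrightarrow \Gamma_c(\partial R,\mathfrak{E}(\partial R)).
\]

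The third step, where the only real work lies, is to show that $\mathrm{res}$ is surjective. This is a Tietze-type extension problem for continuous cross sections of the locally trivial matrix bundle $\mathfrak{E}(\overline{R})$ over the compact space $\overline{R}$, with closed subset $\partial R$. The standard argument proceeds by covering $\partial R$ with finitely many trivialising neighbourhoods $U_i \subseteq \overline{R}$ of $\mathfrak{E}(\overline{R})$, extending a given section over $\partial R \cap U_i$ componentwise using the classical Tietze theorem to a section on $U_i$, and then patching together with a partition of unity subordinate to $\{U_i\} \cup \{\overline{R} \setminus \partial R\}$. I expect this to be the main obstacle only in the bookkeeping sense; the result itself is standard for locally trivial bundles over compact metrisable spaces.

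Once surjectivity is established, $\overline{\mathrm{res}}$ is a $*$-isomorphism and we conclude
\[
C^*_{\mathrm{env}}\bigl(\Gamma_h(\overline{R},\mathfrak{E}(\overline{R}))\bigr) \;\cong\; \Gamma_c(\overline{R},\mathfrak{E}(\overline{R}))/J \;\cong\; \Gamma_c(\partial R,\mathfrak{E}(\partial R)),
\]
which is the claimed identification.
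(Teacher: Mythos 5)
Your proposal is correct and follows essentially the same route as the paper: identify the Shilov ideal as the intersection of the kernels of the boundary representations $\eval_t$, $t\in\partial R$, namely the sections vanishing on $\partial R$, and then identify the quotient with $\Gamma_c(\partial R,\mathfrak{E}(\partial R))$ via restriction. The only difference is that you make explicit the surjectivity of the restriction map (via a Tietze/partition-of-unity extension), a point the paper's proof passes over silently by writing down the inverse map $\varphi$ directly.
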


A special case of \cref{BoundaryRepsAreEval} is proven in \cite{Hopenwasser1973} by Hopenwasser: Let $A(\overline{D})$ be the disk algebra, i.e.~the subalgebra of continuous functions on the closed unit disk $\overline{D}$ that are holomorphic on the interior, and let $\mathfrak{E}(\overline{D})$ be the trivial matrix bundle over $\overline{D}$. Then $\Gamma_c(\overline{D},\mathfrak{E}(\overline{D}))\simeq C(\overline{D}) \otimes M_n (\mathbb{C})$ is a $C^*$-algebra, $\Gamma_h(\overline{D},\mathfrak{E}(\overline{D})) \simeq A(\overline{D})\otimes M_n (\mathbb{C})$ is a subalgebra, and the boundary representations of $\Gamma_c(\overline{D},\mathfrak{E}(\overline{D}))$ for $\Gamma_h(\overline{D},\mathfrak{E}(\overline{D}))$ are given by evaluation at points $t$ on the boundary of $D$, the circle.

The second theorem concerns the structure of $\Gamma_h(\overline{R},\mathfrak{E}(\overline{R}))$ as an Azumaya algebra. An Azumaya algebra, also known as a central separable algebra, can be regarded as the purely algebraic form of an $n$-homogeneous $C^*$-algebras \cite[pg.~532]{Artin1969}. The second theorem, therefore, is not too surprising, but also not easily demonstrated without the help of \cite[Thm.~2.1.5]{Procesi1972}. We will discuss Azumaya algebras more thoroughly in \cref{AzumayaAlgebrasSection}.

\begin{restatable}{theorem}{HoloSectionsAlgIsAzumaya} \label{HoloSectionsAlgIsAzumaya} The algebra $\Gamma_h(\overline{R},\mathfrak{E}(\overline{R}))$ is an Azumaya algebra of rank $n$.
	\end{restatable}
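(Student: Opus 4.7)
My plan is to verify the hypotheses of Procesi's characterization of Azumaya algebras, \cite[Thm.~2.1.5]{Procesi1972}, for the algebra $A := \Gamma_h(\overline{R}, \mathfrak{E}(\overline{R}))$. The first task is to identify the centre $Z$ of $A$. Since the structure group $PU_n(\mathbb{C})$ acts on each fibre $M_n(\mathbb{C})$ by inner automorphisms and therefore fixes the scalar centre $\mathbb{C}\cdot I$ pointwise, the centre sub-bundle of $\mathfrak{E}(\overline{R})$ is canonically trivial. Consequently, $Z$ is isomorphic to the function algebra $A(\overline{R})$ of continuous $\mathbb{C}$-valued functions on $\overline{R}$ that are holomorphic on $R$. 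A classical function-algebra argument identifies the maximal ideal space of $A(\overline{R})$ with $\overline{R}$, so the maximal ideals of $Z$ are exactly the evaluation kernels $m_t = \{f \in Z : f(t) = 0\}$ for $t \in \overline{R}$.

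The polynomial-identity condition is immediate: every polynomial identity of $M_n(\mathbb{C})$ holds in $A$ pointwise via fibre evaluations, hence globally. The substantive hypothesis in Procesi's theorem is fibrewise simplicity: for every maximal ideal $m_t$ of $Z$, the quotient $A/m_t A$ should be central simple of dimension $n^2$ over $\mathbb{C}$, which is equivalent to showing that the evaluation $\eval_t : A \to M_n(\mathbb{C})$ is surjective for each $t \in \overline{R}$.

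I plan to establish surjectivity of $\eval_t$ via the description of a section of $\mathfrak{E}(\overline{R})$ as an equivariant matrix-valued function on the universal cover $\widetilde{D}$: given $t \in \overline{R}$, a lift $\tilde{t} \in \pi^{-1}(t)$, and a target matrix $A_0 \in M_n(\mathbb{C})$, one forms a Poincar\'e-type series
\[
S(x) = \sum_{g \in \pi_1(R)} \rho(g)\, \phi(x \cdot g)\, A_0,
\]
where $\phi$ is a bump function on $\widetilde{D}$ supported in a small neighbourhood of $\tilde{t}$ with $\phi(\tilde{t}) = 1$, chosen within the disk-algebra setting so that the sum converges to a function holomorphic on $D$ with a continuous extension to $\widetilde{D}$. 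A direct computation using the formula $(x,A) \cdot g = (x \cdot g, \rho(g\inv)A)$ verifies the equivariance $S(x \cdot h) = \rho(h\inv) S(x)$, and convergence is controlled by standard Poincar\'e-series estimates, using the hyperbolicity of $R$ (so that the action of $\pi_1(R)$ on $D$ is properly discontinuous) together with the fact that $\rho$ takes values in the compact group $PU_n(\mathbb{C})$. The associated section then lies in $A$ and takes the value $A_0$ at $t$, giving surjectivity of $\eval_t$.

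With the centre identification, the PI condition, and the fibrewise-simplicity hypothesis in place, Procesi's theorem directly concludes that $A$ is an Azumaya algebra of rank $n$. I expect the main obstacle to be the surjectivity step: carrying out the Poincar\'e-series construction so as to secure simultaneously holomorphy on $R$ and continuity on $\partial R$, and in particular justifying convergence and the continuous extension at the boundary. Once that construction is in hand, the remainder of the argument is essentially a formal application of the centre identification and the polynomial-identity formalism packaged in \cite[Thm.~2.1.5]{Procesi1972}.
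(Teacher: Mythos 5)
Your overall strategy---reduce the Azumaya property to a pointwise statement over the centre $Z\simeq A(\overline{R})$---is in the same spirit as the paper, but both of your key steps have genuine gaps. The most concrete one is the section-construction: a holomorphic function on $D$ ``supported in a small neighbourhood of $\tilde t$'' is identically zero, so there is no holomorphic bump function $\phi$ and the Poincar\'e series $\sum_{g}\rho(g)\,\phi(x\cdot g)\,A_0$ cannot be formed in the holomorphic category. One could try to salvage it with a genuinely holomorphic $\phi$ that merely decays away from the orbit of $\tilde t$, using convergence-type estimates for the Fuchsian group (this is close to Widom's construction of $H^p$ sections of vector bundles), but then $S(\tilde t)=A_0+(\text{tail})$ rather than $A_0$, one must argue separately that the image of $\eval_t$ is a closed linear subspace of $M_n(\mathbb{C})$ to get surjectivity, and continuity of the sum up to $\partial R$ is delicate. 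The paper sidesteps all of this: the bundle extends over the open surface $\breve R$ (\cref{ExtensionOfBundleToRegularRegion}), the Grauert--R\"ohrl theorem (\cref{GrauertsTheorem}) makes it holomorphically $PGL_n(\mathbb{C})$-trivial there, and \cref{ExistenceOfSections} then produces a continuous holomorphic section through any prescribed value in any fibre. You should invoke that lemma rather than a Poincar\'e series.

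The second gap is the reduction to surjectivity of $\eval_t$. The maximal-ideal criterion you appeal to requires (i) that $A$ be finitely generated (indeed projective) as a $Z$-module, which you never address, and (ii) that $A/m_tA$ be central simple of dimension $n^2$. Surjectivity of $\eval_t$ only gives a surjection $A/m_tA\twoheadrightarrow M_n(\mathbb{C})$; you would still need $m_tA=\ker\eval_t$, i.e.\ that every section vanishing at $t$ is a finite sum $\sum f_ia_i$ with the $f_i$ central and vanishing at $t$, and this is essentially the local statement you are trying to prove. The paper's proof avoids both issues by using the Formanek-centre form of the Artin--Procesi theorem (\cref{ArtinProcesiTheorem}): evaluating the Formanek--Razmyslov central polynomial on sections passing through matrix units gives, for each $r\in\overline R$, an element $f_r$ of the Formanek centre with $f_r(r)=1$; each $f_r$ is holomorphic on $\breve R$ and so has finitely many zeros in $\overline R$, whence finitely many of the $f_r$ have no common zero, and Arens's description of the maximal ideals of $A(\overline R)$ then yields $1\in\mathcal{F}(\mathcal{A})\mathcal{A}$. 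To complete your route you would have to prove (i) and (ii) directly; otherwise, switch to the Formanek-centre criterion.
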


Thus the structure of the nonselfadjoint subalgebra $\Gamma_h(\overline{R},\mathfrak{E}(\overline{R}))$ reflects some of the structure of its ($n$-homogeneous) $C^*$-envelope.

	\end{section}

\begin{section}{Preliminaries} \label{Preliminaries}
Recall we consider the bordered Riemann surface $\overline{R}$ embedded in its double, $L$ \cite[II.3E]{AhlforsSario60}. By construction of the double, we see $\overline{R}$ is contained in a domain $\breve{R}$, with $\overline{R} \subset \breve{R} \subset L$, where $\breve{R}$ contains annular neighborhoods of the boundary curves \cite[II.3]{AhlforsSario60}. Further, we may choose $\breve{R}$ in such a way that $\pi_1(R) \simeq \pi_1(\breve{R})$, and these two fundamental groups will be isomorphic to a free group on finitely many generators \cite[Ch.~6, Sec.~5, pg.~199]{Massey1967}.

By the hypothesis, the universal covering space of $R$ is taken to be the open unit disk $D$. Further, $D$ can be realized as a subdomain of the universal covering space of $\breve{R}$ that we call $\breve{D}$. The fundamental group of $R$, $\pi_1(R)$, is a group of covering transformations of $D$, and thus acts freely, properly, and holomorphically on $D$, as does $\pi_1(\breve{R})$ on $\breve{D}$. This gives $\pi : D \to R$ and $\breve{\pi} : \breve{D} \to \breve{R}$ the structure of $\pi_1(R)$- and $\pi_1(\breve{R})$- principal bundles, respectively \cite[Prop.~I.14.6-7]{Steenrod1951}. We can in fact say more; the isomorphism $\pi_1(R) \simeq \pi_1(\breve{R})$ is defined in such a way that the action of $\pi_1(\breve{R})$ leaves $D$ invariant, and the action coincides with the action of $\pi_1(R)$ when restricted to $D$. If $r \in \partial R$, then any point in the universal cover over $r$ must be on the boundary of $D$ in $\breve{D}$. Let $\widetilde{D}$ be the subset of $\breve{D}$ that covers $\overline{R}$, so that $\widetilde{D}\setminus D$ is the subset that covers $\partial R$. (Note that this might not be $\mathbb{T}$. In most cases the complement of $\widetilde{D}\setminus D$ in $\mathbb{T}$ is a Cantor set. See e.g.,~\cite[pg.~67]{Katok1992}). Then $\pi_1(\breve{R})$ restricts to act on $\widetilde{D}\setminus D$ continuously, freely, and properly, which gives $\pi : \widetilde{D}\setminus D \to \partial R$ the structure of a $\pi_1(\breve{R})$-principal bundle as well. 

As discussed in the introduction, we will build a fibre bundle over $\overline{R}$ from a representation of $\pi_1(R)$, but we will this bundle primarily as a coordinate fibre bundles as described by Steenrod \cite{Steenrod1951}. For particular applications of fibre bundles to Riemann surfaces, we follow Gunning \cite{Gunning1967}.

Consider a coordinate bundle $\mathfrak{B}=(B,\pi, S, M,G)$ as described in \cite[2.3]{Steenrod1951}, where $B$ is the bundle space, $S$ is the base space, $\pi$ is the projection map from $B$ onto $S$, $M$ is the fibre, and $G$ is the structure group that acts effectively on $M$. Further, the bundle $\mathfrak{B}$ comes with a family of open sets $\mathcal{U}$ contained in $S$ and fibre-preserving homeomorphisms $\phi_U : U \times M \to B|_U$, $U \in \mathcal{U}$, such that for any fixed $x \in U \cap V$, $\phi_U^{-1}(x,\cdot) \cdot \phi_V(x,\cdot)=:g_{UV}(x)$ coincides with the action of an element in $G$. The maps $g_{UV} : U \cap V \to G$ are called \emph{transition functions}. Two bundles $\mathfrak{B_i}=(B_i,\pi_i, S_i, M,G)$, $i=1,2$, are called $G$-equivalent if there is a homeomorphism $F: B_1 \to B_2$ such that $\pi_2 \circ F = \pi_1$ and $F(b\cdot g)=F(b) \cdot g$ for every $b \in B_1$, $g \in G$. 

Equivalently, and this is the perspective we will stress, we may start from the open cover $\mathcal{U}$ and define a \emph{principal coordinate $G$-bundle} as a collection of transition functions $(g_{UV})_{U,V \in \mathcal{U}}$ where $g_{UV}: U \cap V \to G$ are continuous maps that satisfy $g_{UV}g_{VW}=g_{UW}$ \cite[Sec.~3]{Steenrod1951}. The transition functions are 1-cocycles for the cover $\mathcal{U}$ with values in $G$, and $(g_{UV})_\mathcal{U}$ and $(g'_{U'V'})_\mathcal{U'}$ are called \emph{$G$-equivalent} if there is a common refinement of covers $\mathcal{W}$ and a 0-cochain $(\lambda_W)_\mathcal{U}$ for the cover with values in $G$ so that $g'_{XW}=\lambda_{X}\inv g_{XW} \lambda_W$ for every $X, W \in \mathcal{W}$, i.e.~$(g_{XW})_\mathcal{W}$ is cohomologous to $(g'_{XW})_\mathcal{W}$ via $(\lambda_W)_\mathcal{W}$. If $H$ is a subgroup of $G$ and the transition functions of $(g_{UV})_\mathcal{U}$ map into $H$ we may think of the principal bundle as a $G$-bundle or an $H$-bundle; two $H$-equivalent bundles then must be implemented by a cochain with values in $H$. A \emph{principal $G$-bundle} is a $G$-equivalence class of principal coordinate $G$-bundles over $S$. Another perspective of a coordinate fibre bundle is given in terms of sheaves, which will be useful to us as well. If we let $\mathcal{G}_c$ be the sheaf of germs of continuous $G$-valued functions on $S$, then a principal $G$-bundle is an element of the sheaf cohomology set $H^1(S;\mathcal{G}_c)$ and a principal coordinate $G$-bundle associate to a cover $\mathcal{U}$ will be identified as an element in $Z^1(\mathcal{U};\mathcal{G}_c)$. See Raeburn and Williams \cite[4.53-4.54]{Raeburn1998}, who treat this sheaf-theoretic perspective, or see  \cite{Grothendieck1955} for an earlier but still very relevant account.

If $S$ is a Riemann surface, if $G$ is a topological group with a complex analytic structure, and if the cocycles are holomorphic maps, then we may similarly define the \emph{holomorphic principal coordinate $G$-bundle} determined by the data $(g_{UV})_\mathcal{U}$. If we let $\mathcal{G}_h$ be the sheaf of germs of holomorphic $G$-valued functions on $S$, then a holomorphic principal $G$-bundle is an element of the set $H^1(S; \mathcal{G}_h)$. 

Given an element $(g_{UV})_\mathcal{U} \in Z^1(\mathcal{U};\mathcal{G}_c)$ along with the continuous and effective action of the structure group $G$ on a topological space (or complex manifold) $M$, then we define the \emph{coordinate $M$-fibre $G$-bundle} determined by the data $((g_{UV})_\mathcal{U},M)$. Two coordinate $M$-fibre bundles are called \emph{$G$-equivalent} if their underlying principal bundles are $G$-equivalent and they have conjugate actions of $G$ on $M$. A fibre bundle is an equivalence class of such coordinate bundles. If $S$ is a Riemann surface, if $G$ a topological group with complex structure, and if $M$ a complex manifold, we may define analogously define a \emph{holomorphic coordinate fibre bundle} and a \emph{holomorphic fibre bundle}.

The \emph{product principal bundle} over a space $S$ is the equivalence class of the coordinate principal bundle with a single transition function $g_{SS} : S \to 1_G$, where $1_G$ is the unit of $G$. We say a fibre bundle is \emph{$G$-trivial} if its underlying principal bundle has a coordinate representative that is $G$-equivalent to the product bundle.

Suppose $G$ is a subgroup of $PGL_n(\mathbb{C})$, where $PGL_n(\mathbb{C})$ is viewed as the group of algebra automorphisms of $M_n(\mathbb{C})$. We define a \emph{matrix $G$-bundle} to be a fibre bundle with structure group $G$ and fibres $M_n (\mathbb{C})$ on which the group $G$ acts by algebra automorphisms. For most of this paper $G$ is the subgroup $PU_n(\mathbb{C})$. The following discussion could be slightly shortened if $PU_n(\mathbb{C})$ were the only subgroup. However there is a crucial moment in which we deal with $PGL_n(\mathbb{C})$-bundles, namely, when we invoke Grauert's theorem, and thus we present a unified approach to the theory.

We now wish to construct the flat coordinate matrix $PU_n(\mathbb{C})$-bundles described in the introduction (and defined in \cref{DefOfOurBundles} below), which are determined by a representation of the fundamental group of a surface. We have not been able to find such a construction for matrix bundles explicitly written down in the literature. However, this may be accomplished through minor modifications of the theory for vector bundles, which we have cited in the following lemmas. For the interested reader, we include a discussion of the proofs as a short appendix.

For a group $G$ and space $S$, denote by \underline{$G$} the constant sheaf of $G$-valued functions on $S$ (i.e.~germs of \emph{locally constant}, $G$-valued functions), denote by $\mathcal{G}_c$ the sheaf of germs of \emph{continuous} $G$-valued functions on $S$, and denote by $\mathcal{G}_h$ the sheaf of germs of \emph{holomorphic} $G$-valued functions on $S$. In the following definitions we let $i$ be either the inclusion map $i : \mbox{\underline{$G$} } \to \mathcal{G}_c$ or $i: \mbox{\underline{$G$}} \to \mathcal{G}_h$, respectively.
\begin{defn} Let $G$ be a subgroup of $PGL_n(\mathbb{C})$ and $S$ a topological space. Then a \emph{flat continuous matrix $G$-bundle} is a matrix $G$-bundle for which there is a coordinate representative with locally constant transition functions. \label{DefFlatContMatrixBundle}
	\end{defn}
It is customary to identify a flat matrix $G$-bundle with an element in the image of the map $i_* : H^1(S;\mbox{\underline{$G$}}) \to H^1(S;\mathcal{G}_c)$.
\begin{defn} Let $G$ be a subgroup of $PGL_n(\mathbb{C})$, and let $S$ be a Riemann surface. Then a \emph{flat holomorphic matrix $G$-bundle} is a holomorphic matrix $G$-bundle for which there is a coordinate representative with locally constant transition functions. Such a coordinate representative is also referred to as flat. \label{DefFlatHoloMatrixBundle}
	\end{defn}
It is customary to identify a flat holomorphic matrix $G$-bundle with an element in the image of the map $i_* : H^1(S;\mbox{\underline{$G$}}) \to H^1(S;\mathcal{G}_h)$. 

Note that if we require that the matrix bundle be both holomorphic and a $PU_n(\mathbb{C})$-bundle, then the bundle will be flat (i.e.~satisfy \cref{DefFlatHoloMatrixBundle}) since the holomorphic transition functions must have values in $PU_n(\mathbb{C})$. While a product bundle (i.e.~a trival bundle) is flat, not all flat bundles are trivial. This is seen clearly in the following lemma, which is a well-known fact in a variety of contexts. For a version in the case of flat holomorphic vector bundles, see for example \cite[Lem.~27]{Gunning1966}; in the case of flat continuous fibre bundles, see for example \cite[Thm.~13.9]{Steenrod1951}; and \cite[2.6]{Kobayashi1987} has a standard version for flat differentiable vector bundles.

Let $\Hom(\pi_1(S),G)$ denote all of the homomorphisms of $\pi_1(S)$ into $G$, and let $\Hom(\pi_1(S),G)/G$ denote the homomorphisms modulo the action of $G$ by conjugation via $(\varphi \cdot g )(\gamma) = \varphi(\gamma) \cdot g$ for every $\gamma \in \pi_1(S)$.

\begin{lemma} Any flat coordinate matrix $G$-bundle over $S$ as in \cref{DefFlatContMatrixBundle} is determined by an element in $\Hom(\pi_1(S),G)$, and every element in $\Hom(\pi_1(S),G)$ determines a flat matrix $G$-bundle. If two elements $\rho_1,\rho_2 \in \Hom(\pi_1(S),G)$ are equivalent under the action of $G$ then they define equivalent flat coordinate matrix bundles, and for any two equivalent flat coordinate matrix bundles $\mathfrak{B}_i$, $i=1,2$ there exists $\rho_i \in \Hom(\pi_1(S),G)$ determining $\mathfrak{B}_i$, $i=1,2$, so that $\rho_1$ and $\rho_2$ are equivalent.\label{FlatPrincipalBundleAsRep}
	\end{lemma}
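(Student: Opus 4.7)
My plan is to prove this as the standard bundle-representation correspondence for flat bundles, via the universal cover and the holonomy construction. I would break the argument into three parts: the construction of a bundle from a representation, the inverse construction via holonomy, and the matching of equivalence relations.

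For the forward construction, given $\rho\in\Hom(\pi_1(S),G)$, I would take an open cover $\{U_\alpha\}$ of $S$ by path-connected sets that are evenly covered by the universal cover $\pi:\widetilde S\to S$, and on each $U_\alpha$ choose a continuous section $s_\alpha:U_\alpha\to\widetilde S$ of $\pi$. On $U_\alpha\cap U_\beta$ the two lifts $s_\alpha$ and $s_\beta$ differ by a unique deck transformation $\gamma_{\alpha\beta}\in\pi_1(S)$, which is locally constant because distinct sheets of $\pi$ are disjoint. Setting $g_{\alpha\beta}:=\rho\circ\gamma_{\alpha\beta}$ then yields a locally constant $G$-valued cocycle, and combining this with the given algebra action of $G$ on $M_n(\mathbb{C})$ produces a flat coordinate matrix $G$-bundle. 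Different choices of the $s_\alpha$ yield $G$-equivalent coordinate bundles, and replacing $\rho$ by a conjugate $g\inv\rho(\cdot)g$ changes the cocycle by the constant coboundary $\lambda_\alpha\equiv g$; hence conjugate representations give $G$-equivalent bundles.

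For the converse I would extract the holonomy representation of a given flat coordinate matrix $G$-bundle. After refining the cover so that each $U_\alpha$ and each $U_\alpha\cap U_\beta$ is path-connected and simply connected, the flat cocycle $g_{\alpha\beta}$ is locally constant with a single $G$-value on each component of $U_\alpha\cap U_\beta$. Fix a basepoint $x_0\in U_{0}$; for a loop $\sigma$ at $x_0$ I would use the Lebesgue number lemma to partition $\sigma$ into arcs lying successively in $U_{0},U_{1},\dots,U_{k}=U_{0}$, with transition points $x_i\in U_{i-1}\cap U_{i}$, and set
\[ \rho(\sigma):= g_{U_{0}U_{1}}(x_1)\,g_{U_{1}U_{2}}(x_2)\cdots g_{U_{k-1}U_{0}}(x_0). \]
Local constancy of the $g_{\alpha\beta}$ and path-connectedness of the intersections makes this independent of the choice of the $x_i$; the cocycle identity makes it unchanged under refinement of the partition; and a square-subdivision argument applied to a homotopy between two loops promotes this to invariance under homotopy rel $x_0$. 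This last step, the homotopy invariance, is the main technical obstacle and really rests on the good-cover refinement. Once it is established, $\rho:\pi_1(S,x_0)\to G$ is a homomorphism, and running the construction of the previous paragraph on $\rho$ produces a cocycle cohomologous to the original, so $\rho$ determines $\mathfrak{B}$.

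The equivalence statement now follows quickly. One direction is the last sentence of the second paragraph. For the other, given two $G$-equivalent flat coordinate bundles $\mathfrak{B}_1\sim\mathfrak{B}_2$, let $\rho_1$ be the holonomy representation of $\mathfrak{B}_1$ from the previous paragraph. Then the forward construction applied to $\rho_1$ produces a flat coordinate bundle $G$-equivalent to $\mathfrak{B}_1$, and hence to $\mathfrak{B}_2$. Taking $\rho_2:=\rho_1$, both $\rho_i$ determine the respective $\mathfrak{B}_i$ and are trivially conjugate in $G$, completing the proof.
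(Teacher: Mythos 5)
The paper does not argue this lemma directly: its ``proof'' is a citation of Gunning \cite[Lem.~27]{Gunning1967} and Steenrod \cite[Thm.~13.9]{Steenrod1951} for the bijection between $\Hom(\pi_1(S),G)/G$ and $H^1(S;\mbox{\underline{$G$}})$. Your proposal is in effect a self-contained reconstruction of the proof of those cited results via the universal cover and the holonomy representation, so the underlying mathematics is the same; what you gain is independence from the references, at the cost of having to supply the standard verifications yourself.

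Two of those verifications are asserted rather than argued, and one of them is the actual content of the first claim of the lemma. Homotopy invariance of the holonomy you at least flag. But the sentence ``running the construction of the previous paragraph on $\rho$ produces a cocycle cohomologous to the original, so $\rho$ determines $\mathfrak{B}$'' \emph{is} the statement that every flat coordinate bundle is determined by a representation; it is not a formal consequence of anything preceding it and needs its own (short, standard) argument --- e.g.\ fix base points $x_\alpha\in U_\alpha$ and paths from $x_0$ to $x_\alpha$, let $\lambda_\alpha\in G$ be the holonomy along those paths, and verify $g_{\alpha\beta}=\lambda_\alpha\,\rho(\gamma_{\alpha\beta})\,\lambda_\beta\inv$ componentwise, so that the implementing cochain is constant on each $U_\alpha$ and in particular locally constant. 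Relatedly, you should say which equivalence relation is in force throughout: the correspondence with $\Hom(\pi_1(S),G)/G$ matches conjugacy of representations with equivalence of flat cocycles implemented by \emph{locally constant} cochains (i.e.\ classes in $H^1(S;\mbox{\underline{$G$}})$), not with topological $G$-equivalence; with merely continuous cochains the converse direction fails (nonconjugate homomorphisms of $\pi_1$ of an annulus into $U(1)$ all yield topologically trivial flat line bundles). Your device of taking $\rho_2:=\rho_1$ satisfies the literal wording of the last clause, but only by reading ``determines'' as ``determines up to whatever equivalence was assumed,'' and it does not establish the bijection the paper actually uses later.
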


\begin{lemma} Two flat \emph{holomorphic} coordinate matrix $PU_n(\mathbb{C})$-bundles over a Riemann surface $S$ are holomorphically $PU_n(\mathbb{C})$-equivalent iff for any determining representation $\rho_1$ of the first bundle and $\rho_2$ of the second bundle, $\rho_1$ and $\rho_2$ are equivalent in $\Hom(\pi_1(S),PU_n(\mathbb{C})))$. \label{FlatMatrixBundleAsRep}
	\end{lemma}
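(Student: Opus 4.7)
The plan is to prove the two implications separately, with the forward direction essentially a cochain computation and the reverse direction relying on a real-analytic rigidity for holomorphic maps into $PU_n(\mathbb{C})$. For the forward direction, I would use the natural trivializing cover inherited from the universal cover (as in the proof of \cref{FlatPrincipalBundleAsRep}): if $\rho_2 = g^{-1} \rho_1 g$ for some $g \in PU_n(\mathbb{C})$, then the cocycles defining the two flat bundles differ by conjugation by the constant $g$, and the $0$-cochain $\lambda_W \equiv g$ on a common refinement implements the equivalence. Being constant, this cochain is holomorphic with values in $PU_n(\mathbb{C})$, so the bundles are holomorphically $PU_n(\mathbb{C})$-equivalent.

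For the reverse direction, I would start from a holomorphic $0$-cochain $(\lambda_W)$ on a common refinement $\mathcal{W}$, with values in $PU_n(\mathbb{C})$, implementing the equivalence. The crux of the proof — and the main obstacle — is to show that each $\lambda_W : W \to PU_n(\mathbb{C})$ is locally constant. The key fact is that $PU_n(\mathbb{C})$ sits as a totally real submanifold of the complex manifold $PGL_n(\mathbb{C})$: the Cartan-style decomposition $\mathfrak{pgl}_n(\mathbb{C}) = \mathfrak{pu}_n(\mathbb{C}) \oplus i\,\mathfrak{pu}_n(\mathbb{C})$ yields $T_p PU_n(\mathbb{C}) \cap J \cdot T_p PU_n(\mathbb{C}) = 0$ at every $p$, where $J$ denotes the complex structure on $PGL_n(\mathbb{C})$. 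For any nonzero $v \in T_z W$, complex linearity of the derivative gives $d\lambda_W(iv) = J \cdot d\lambda_W(v)$; since $\lambda_W$ lands in $PU_n(\mathbb{C})$, both $d\lambda_W(v)$ and $d\lambda_W(iv)$ must lie in $T_{\lambda_W(z)} PU_n(\mathbb{C})$, and the totally real condition then forces $d\lambda_W(v) = 0$. This is the same principle the paper invokes in the note preceding \cref{FlatPrincipalBundleAsRep} to conclude that holomorphic $PU_n(\mathbb{C})$-bundles are automatically flat.

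Once each $\lambda_W$ is shown to be locally constant, the holomorphic equivalence is in fact implemented by a locally constant $PU_n(\mathbb{C})$-valued cochain, so the two flat bundles are also equivalent as flat coordinate matrix bundles in the sense of \cref{DefFlatContMatrixBundle}. An appeal to \cref{FlatPrincipalBundleAsRep} then yields equivalence of the associated representations in $\Hom(\pi_1(S), PU_n(\mathbb{C}))$, completing the reverse direction. The bookkeeping at the beginning and the end is routine; essentially all of the genuine content lives in the totally real rigidity step.
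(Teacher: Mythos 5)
Your proposal is correct and follows essentially the same route as the paper: the paper's proof consists precisely of the observation that any holomorphic $PU_n(\mathbb{C})$-valued cochain implementing the equivalence must be locally constant, so that $i_* : H^1(S,\mbox{\underline{$PU_n(\mathbb{C})$}}) \to H^1(S,\mathcal{PU}_n(\mathbb{C})_h)$ is injective, after which \cref{FlatPrincipalBundleAsRep} finishes the argument. You simply supply the details the paper leaves implicit — the totally real rigidity justifying local constancy, and the easy forward direction via a constant cochain — both of which are sound.
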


In our work we need to extend bundles over the interior of the bordered Riemann surface $\overline{R}$ to bundles over the slightly larger surface $\breve{R}$, where $\breve{R}$ is as described in the beginning of this section. We will do so using the following lemma.

\begin{lemma} \cite[pg.~306]{Widom1971} Let $H$ be a subgroup of $PU_n(\mathbb{C})$. Any flat matrix $H$-bundle over $R$ with coordinate representative $(g_{UV})_\mathcal{U}$ may be extended to a flat matrix $H$-bundle  over $\breve{R}$ with coordinate representative $(h_{U'V'})_\mathcal{U'}$ so that $U=U' \cap R$ for every $U \in \mathcal{U}$ and $g_{UV}=h_{U'V'}|_R$. \label{ExtensionOfBundleToRegularRegion}
	\end{lemma}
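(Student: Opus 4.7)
The plan is to reduce the extension problem to the extension of the holonomy representation, and then to realize the extended bundle with a coordinate representative whose restriction to $R$ literally equals $(g_{UV})_{\mathcal{U}}$.

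First I would apply \cref{FlatPrincipalBundleAsRep} to the given flat matrix $H$-bundle on $R$ to extract a representation $\rho : \pi_1(R) \to H \subseteq PU_n(\mathbb{C})$. By the canonical isomorphism $\pi_1(R) \simeq \pi_1(\breve{R})$ recalled at the start of \cref{Preliminaries}, $\rho$ is identified with a representation $\breve{\rho} : \pi_1(\breve{R}) \to H$. A second application of \cref{FlatPrincipalBundleAsRep}, this time over $\breve{R}$ and to $\breve{\rho}$, produces a flat matrix $H$-bundle whose restriction to $R$ agrees, as a bundle class, with the original. This already gives an extension up to equivalence; what remains is the cover-level matching.

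To arrange this matching I would construct the cover $\mathcal{U}'$ by hand. For each $U \in \mathcal{U}$ with $\overline{U} \subset R$, set $U' = U$. For each $U$ accumulating on $\partial R$, set $U' = U \cup T_U$, where $T_U$ is a thin collar in $\breve{R} \setminus \overline{R}$ attached across $\overline{U} \cap \partial R$, chosen inside the annular neighborhoods of the boundary curves provided by the double construction of \cref{Preliminaries}. Adding, if necessary, a few extra open sets inside $\breve{R} \setminus \overline{R}$ to cover what $\bigcup U'$ still misses yields a cover $\mathcal{U}'$ of $\breve{R}$ with $U' \cap R = U$ for each original $U$. Shrinking the collars enough, I would ensure that every connected component of $U' \cap V'$ meets $U \cap V$. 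On each such component the locally constant function $g_{UV}$ extends uniquely by constancy to a locally constant function $h_{U'V'}: U' \cap V' \to H$; by the same connectedness argument, the cocycle identity propagates from $U \cap V \cap W$ to $U' \cap V' \cap W'$. The resulting flat coordinate bundle has holonomy $\breve{\rho}$ by construction, so by \cref{FlatPrincipalBundleAsRep} applied over $\breve{R}$ it represents precisely the extension built in the previous paragraph.

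The main obstacle is the topological bookkeeping involved in choosing the collars $T_U$ so that every component of each intersection $U' \cap V'$, and of each triple intersection $U' \cap V' \cap W'$, actually meets $R$. Without this the locally constant extension of $g_{UV}$ is not forced by the given data and could yield a different flat bundle over $\breve{R}$. Because $\partial R$ is a disjoint union of finitely many analytic curves equipped with annular neighborhoods in $\breve{R}$, the problem is local to each boundary component; one can work in each annular neighborhood separately and take collars thin enough in the transverse direction that all relevant components of intersections reach back into $R$, after which the algebraic verification of the cocycle and restriction conditions is routine.
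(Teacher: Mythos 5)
Your first paragraph is exactly the paper's argument: the paper's entire proof is a citation to Widom (pg.~306) with \cref{FlatMatrixBundleAsRep} substituted for the vector-bundle correspondence, i.e.\ classify the flat bundle by its holonomy in $\Hom(\pi_1(R),H)$, transport it across the isomorphism $\pi_1(R)\simeq\pi_1(\breve{R})$, and rebuild over $\breve{R}$; your collar construction supplies the cover-level matching that the paper leaves implicit. One small caution on that extra step: requiring each component of $U'\cap V'$ to \emph{meet} $U\cap V$ is not quite enough --- you also need that no component of $U'\cap V'$ contains two components of $U\cap V$ on which the locally constant $g_{UV}$ takes different values (which can happen when two components of $U\cap V$ accumulate at the same boundary point), though this is harmless for the coordinate representatives actually used in \cref{DefOfOurBundles}, which come from Gunning's construction on a cover with connected intersections.
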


Thus we make the following defintion, which describes the bundles we will ultimately be working with and the relationship between their restrictions to subsets.

\begin{defn} \label{DefOfOurBundles} By \cref{FlatMatrixBundleAsRep}, a representation $\rho: \pi_1(R) \to PU_n(\mathbb{C})$ defines a coordinate holomorphic matrix $PU_n(\mathbb{C})$-bundle over $R$. Extend this bundle to a bundle over $\breve{R}$, also defined by $\rho$, using \cref{ExtensionOfBundleToRegularRegion}. Name the resulting coorindate bundle $\mathfrak{E}_\rho(\breve{R})$. Let $\mathfrak{E}_\rho(R)$ be the restriction back to the flat holomorphic matrix $PU_n(\mathbb{C})$-bundle over $R$, and let the restriction to the flat continuous coordinate matrix $PU_n(\mathbb{C})$-bundle over $\overline{R}$ or $\partial R$ be denoted as $\mathfrak{E}_\rho(\overline{R})$ and $\mathfrak{E}_\rho(\partial R)$, respectively. 
	\end{defn}

A \emph{continuous (or holomorphic) section} of a coordinate fibre bundle $\mathfrak{B}=(B,\pi, S, M,G)$ is a continuous (holomorphic) map $\sigma : S \to B$ so that $\pi \circ \sigma = 1_S$. When we view $\mathcal{B}$ as a pair $((g_{UV})_\mathcal{U},M)$, we may also think of a section $\sigma$ as a collection of maps $(\sigma_U)_\mathcal{U}$ where each $\sigma_U: U \to M$ is a continuous (or holomorphic) function such that $\sigma_U g_{UV} = \sigma_V$ for every $U,V \in \mathcal{U}$ \cite[pg.~177-178]{Gunning1966}. The latter relation defines a 1-1 correspondence between sections of two $G$-equivalent coordinate fibre bundles, namely, if $(\sigma_U)_\mathcal{U}$ is a (holomorphic) section of $((g_{UV})_\mathcal{U},M)$ and $(g_{UV})_\mathcal{U}$ is $G$-equivalent to $(g'_{UV})_\mathcal{U}$ via the cochain $(\lambda_U)_\mathcal{U}$, then $(\sigma_U \lambda_U\inv)_\mathcal{U}$ is a (holomorphic) section of $((g'_{UV})_\mathcal{U},M)$.

In \cite{Grauert1958T}, Grauert proved several fundamental results on holomorphic bundles over complex spaces. The particular theorem which we will need is the following \cite[Thm.~7]{Grauert1958T}, which was proven by Rohrl slightly earlier in \cite[Satz 3]{Rohrl1957}.

\begin{theorem}[Grauert-Rohrl Theorem] Let $G$ be a connected, complex Lie group and $S$ an open, connected Riemann surface. Then any fibre bundle over $S$ with group $G$ is holomorphically $G$-trivial.  \label{GrauertsTheorem}
	\end{theorem}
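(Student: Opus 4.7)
My plan is to attack the theorem in two stages: first establish that every \emph{continuous} principal $G$-bundle over $S$ is topologically $G$-trivial, then promote this to holomorphic $G$-triviality via the Oka principle on Stein manifolds.

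For the topological step, I would use the classical fact that every open connected Riemann surface is homotopy equivalent to a one-dimensional CW complex (a bouquet of circles, possibly countably many). Continuous principal $G$-bundles over $S$ are classified by homotopy classes $[S, BG]$ of maps into the classifying space of $G$. Since $G$ is connected, $BG$ is 1-connected ($\pi_1(BG) \cong \pi_0(G) = 0$), so for $S$ with the homotopy type of a 1-dimensional CW complex the obstruction theory collapses and $[S, BG]$ reduces to a single class. Hence every continuous principal $G$-bundle is trivial, and therefore any associated $G$-fibre bundle with arbitrary fibre $M$ is $G$-trivial in the continuous category.

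For the analytic step, open Riemann surfaces are Stein manifolds by the Behnke--Stein theorem. The Oka--Grauert principle then asserts that the natural map $i_\ast : H^1(S, \mathcal{G}_h) \to H^1(S, \mathcal{G}_c)$, induced by viewing a holomorphic $G$-cocycle as a continuous one, is a bijection whenever $S$ is Stein and $G$ is a complex Lie group. Combining with the first step, the bundle (being continuously trivial) must be the image of the trivial class in $H^1(S,\mathcal{G}_h)$, and hence holomorphically trivial.

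The main obstacle is unquestionably the second step, which is the deep content of the Grauert--R\"ohrl theorem itself. A self-contained proof in the one-dimensional Stein case would proceed by exhausting $S$ by a nested sequence of Runge open subsets and iteratively trivializing a given holomorphic cocycle: at each stage one uses that continuous $G$-valued cocycles admit holomorphic approximations on precompact Runge subdomains (a Runge--Mergelyan statement for $G$-valued maps, obtained by exponentiating from the Lie algebra) and then controls the multiplicative discrepancy between consecutive approximations by solving additive holomorphic cocycle equations, i.e.\ by invoking Cartan's Theorem B for the $\mathfrak{g}$-valued linearization. Executing this rigorously is the core of Grauert's argument, which is why the theorem is invoked as a black box here rather than reproven.
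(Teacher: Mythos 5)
The paper does not actually prove this statement---it is imported verbatim from the literature, cited as Grauert's Theorem~7 in \cite{Grauert1958T} and R\"ohrl's Satz~3 in \cite{Rohrl1957}---so there is no internal proof to measure you against. Your two-step outline is the standard modern derivation and is correct as far as it goes: the topological step is sound (an open connected Riemann surface deformation retracts onto a $1$-dimensional CW complex, and since $\pi_1(BG)\cong\pi_0(G)=0$ for connected $G$, the set $[S,BG]$ is a point, so every continuous principal $G$-bundle, hence every associated fibre bundle in the paper's sense of $G$-triviality, is topologically trivial), and the analytic step correctly identifies $S$ as Stein via Behnke--Stein and invokes the Oka--Grauert principle that $H^1(S,\mathcal{G}_h)\to H^1(S,\mathcal{G}_c)$ is bijective. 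But as you yourself acknowledge, that second step \emph{is} the theorem's deep content, so your argument replaces one black box (Grauert--R\"ohrl) with an equally deep one (the Oka--Grauert principle); in that sense your proposal is at the same level of self-containment as the paper's bare citation, merely with the topological reduction made explicit. Historically the route was slightly different: R\"ohrl's 1957 proof works directly on open Riemann surfaces by Runge-type exhaustion and factorization arguments (close in spirit to the sketch in your final paragraph), predating Grauert's general Oka principle for Stein spaces, and the one-dimensional case is genuinely easier because Cartan's Theorem~B for the Lie-algebra-valued linearization reduces to classical Mittag-Leffler/Runge theory on the surface. So: no gap in your reasoning, but no new proof either---the hard analysis remains exactly where the paper left it, in the cited sources.
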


We will now apply Grauert's theorem in the case when $G=PGL_n(\mathbb{C})$ to show there exist many continuous holomorphic sections in the bundles we study. This is necessary for \cref{HoloSectionsGenerateCont} and\cref{HoloSectionsAlgIsAzumaya}) later.

\begin{lemma} Let $\mathfrak{E}_\rho(\overline{R})=((g_{UV})_\mathcal{U},M_n(\mathbb{C}))$, where $(g_{UV})_\mathcal{U}$ are restrictions of the transition functions of $\mathfrak{E}_\rho(\breve{R})$. For any point $r \in \overline{R}$, $A \in M_n$, and a prescribed open set $U_0$ containing $r$, we can find a continuous section $\sigma$ of the bundle over $\overline{R}$ so that $(\sigma_U)_\mathcal{U}$ is holomorphic over $R$ and $\sigma_{U_0}(r)=A$. \label{ExistenceOfSections}
	\end{lemma}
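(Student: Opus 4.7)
The plan is to apply the Grauert--Rohrl theorem (\cref{GrauertsTheorem}) to the extended bundle $\mathfrak{E}_\rho(\breve R)$ and then restrict to $\overline R$. First, I would regard $\mathfrak{E}_\rho(\breve R)$ as having structure group $PGL_n(\mathbb{C})$, enlarged from $PU_n(\mathbb{C})$; this is a connected complex Lie group. The surface $\breve R$ is an open connected Riemann surface, being a proper open connected subset of the compact double $L$, so the hypotheses of \cref{GrauertsTheorem} are met and $\mathfrak{E}_\rho(\breve R)$ is holomorphically $PGL_n(\mathbb{C})$-trivial. Concretely, after possibly refining the cover $\mathcal U$, there is a $0$-cochain $(\lambda_U)_{\mathcal{U}}$ of holomorphic $PGL_n(\mathbb{C})$-valued maps implementing the $PGL_n(\mathbb{C})$-equivalence with the trivial cocycle, i.e.\ $\lambda_U\inv g_{UV} \lambda_V = 1$ on each overlap.

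By the section-correspondence recalled in the preliminaries (immediately after the notion of $G$-equivalence), every global holomorphic map $\tau : \breve R \to M_n(\mathbb{C})$ — which is exactly a global holomorphic section of the trivial coordinate $M_n(\mathbb{C})$-bundle — lifts to a holomorphic section $(\sigma_U)_{\mathcal{U}}$ of $\mathfrak{E}_\rho(\breve R)$ via $\sigma_U = \tau|_U \cdot \lambda_U$, where $\cdot$ denotes the conjugation action of $PGL_n(\mathbb{C})$ on $M_n(\mathbb{C})$. Because this action is a bijection of $M_n(\mathbb{C})$ for every single value of $\lambda_U$, taking $\tau$ to be the \emph{constant} function with value $A \cdot \lambda_{U_0}(r)\inv$ yields $\sigma_{U_0}(r) = A$.

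Finally, I would restrict $(\sigma_U)_{\mathcal{U}}$ from $\breve R$ to $\overline R$, restricting the cover $\mathcal U$ as well, per the setup in \cref{DefOfOurBundles}. The restricted section is continuous on $\overline R$, holomorphic on the interior $R$, and meets the prescribed value condition $\sigma_{U_0}(r) = A$.

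The main subtlety is a setup issue rather than a calculation: Grauert--Rohrl requires a connected \emph{complex} Lie structure group and an \emph{open} base, whereas $\mathfrak{E}_\rho(\overline R)$ natively has the compact real group $PU_n(\mathbb{C})$ and lives over a closed surface. Both obstacles have already been dissolved by earlier preliminaries — $PU_n(\mathbb{C})$-bundles are viewed as $PGL_n(\mathbb{C})$-bundles, and the bundle is extended to the open surface $\breve R$ via \cref{ExtensionOfBundleToRegularRegion} — and this is precisely why those technical moves were made. Given that, the argument proceeds cleanly.
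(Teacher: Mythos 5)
Your proposal is correct and follows essentially the same route as the paper: extend to $\breve{R}$, invoke the Grauert--Rohrl theorem to obtain a holomorphic $PGL_n(\mathbb{C})$-trivialization via a cochain $(\lambda_U)_\mathcal{U}$, transport a suitably chosen (constant) section of the product bundle back through that cochain to hit the prescribed value $A$ at $r$, and restrict to $\overline{R}$. The only differences are cosmetic (the direction of the $\lambda_U$ versus $\lambda_U\inv$ convention in the section correspondence).
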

\begin{proof} Construct $((g_{UV})_\mathcal{U},M_n(\mathbb{C}))$ from $\mathfrak{E}_\rho(R)$ using \cref{ExtensionOfBundleToRegularRegion}. By \cref{GrauertsTheorem}, the bundle is holomorphically $PGL_n(\mathbb{C})$-trivial. Let the cochain $(\lambda_U)_\mathcal{U}$ implement the $PGL_n(\mathbb{C})$-bundle equivalence with the product bundle, and pick a section $(\sigma_U)_\mathcal{U}$ of the product bundle so that $\sigma_{U_0}(r)=A \lambda_{U_0}(r)$. Then $(\sigma_U \lambda_U\inv)$ is a holomorphic section of the bundle $((g_{UV})_\mathcal{U},M_n(\mathbb{C})$ over $\breve{R}$, and $(\sigma_{U\cap\overline{R}}\lambda_{U \cap \overline{R}})$ is a continuous section of the bundle over $\overline{R}$ that is holomorphic on $R$.
	\end{proof}

For the remainder of this paper, we say a section $(\sigma_U)_\mathcal{U}$ of the bundle $\mathfrak{E}_\rho(\overline{R})$ is \emph{continuous holomorphic} if it is a continuous section such that each $\sigma_U$ is holomorphic when restricted to $R$. Note that if $\mathfrak{E}_{\rho_1}(R)$ is holomorphically $PU_n(\mathbb{C})$-equivalent to $\mathfrak{E}_{\rho_2}(R)$ then a continuous holomorphic section of $\mathfrak{E}_{\rho_1}(\overline{R})$ will be mapped to a continuous holomorphic section of $\mathfrak{E}_{\rho_2}(\overline{R})$, and vice versa.

Let $\mathcal{U}$ be an open cover of $\breve{R}$, and let $U$ be an open set in $\mathcal{U}$ with $r \in U$. Let $(g_{UV})_\mathcal{U}$ be the transition functions of $\mathfrak{E}(\overline{R})$ and define the map $\eval_{\mathcal{U},U,r} : \Gamma_c(\overline{R},\mathfrak{E}(\overline{R})) \to M_n(\mathbb{C})$ by $\eval_{\mathcal{U},U,r}((\sigma_U)_\mathcal{U})=\sigma_U(r)$. It's easy to check that $\eval_{\mathcal{U},U,r}$ is an irreducible representation of $\Gamma_c(\overline{R},\mathfrak{E}(\overline{R}))$. If $U_1,U_2 \in \mathcal{U}$, then there is a cocycle $g_{U_1U_2}$ for which $\sigma_{U_1} g_{U_1U_2} = \sigma_{U_2}$, thus for $r \in U_1 \cap U_2$, $\eval_{\mathcal{U},U_1,r}$ will be unitarily equivalent to $\eval_{\mathcal{U},U_2,r}$. So given a fixed coordinate bundle $((g_{UV})_\mathcal{U},M)$ we abuse language and call this unitary equivalence class of representations evaluation at $r$ or $\eval_r$. Note that once a coordinate representative $((g_{UV})_\mathcal{U},M)$ is fixed, a section over $T\subseteq \breve{R}$ is completely determined by its evaluations at the points in $T$.

\end{section}

\begin{section}{The operator algebra of continuous holomorphic sections} \label{BoundaryRepsSection}

With point evaluations defined, we may proceed to define a $*$-algebra structure on the collection of continuous sections $\Gamma_c(\overline{R},\mathfrak{E}(\overline{R}))$ by pointwise addition and multiplication. Since the transition functions $g_{UV}$ are pointwise $*$-automorphisms, pointwise involution on a section  $\sigma = (\sigma_U)_\mathcal{U}$ is well-defined. For each $r \in \overline{R}$, the operator norm of $(\sigma_U)_\mathcal{U}(r)$ in $M_n(\mathbb{C})$ is also well-defined by these hypotheses on the transition functions. Set $\|(\sigma_U)\|:= \sup_{r\in U \subseteq \overline{R}} \|\sigma_U(r)\|$. There is a maximum modulus theorem in our setting: if $\sigma \in \Gamma_h(\overline{R},\mathfrak{E}(\overline{R}))$, then $\|(\sigma_U)\|:= \sup_{r\in U \cap \partial R} \|\sigma_U|_{\partial R}(r)\|$. For each $U \in \mathcal{U}$, $\sigma_U$ is a matrix of holomorphic functions, and $\| \sigma_U \|^2 = \|\sigma_U^* \sigma_U \|$ will be a subharmonic function in $U$. By hypotheses on the transition functions, $\|\sigma_U(r)\|^2=\|\sigma_V(r)\|^2$ when $r \in U \cap V$, and so $\|(\sigma_U)_\mathcal{U}\|^2$ is a subharmonic function on all of $R$ and takes its maximum on $\partial R$. Moreover, $\Gamma_h(\overline{R},\mathfrak{E}(\overline{R}))$ will be a closed subset of $\Gamma_c(\overline{R},\mathfrak{E}(\overline{R}))$ under the norm. We also see the following statement holds from the previous observations in \cref{Preliminaries}.

\begin{remark} If $\mathfrak{E}_{\rho_1}(\breve{R})$ is holomorphically $PU_n(\mathbb{C})$-equivalent to $\mathfrak{E}_{\rho_2}(\breve{R})$ then $\Gamma_h(\overline{R},\mathfrak{E}_{\rho_1}(R))$ is isomorphic to $\Gamma_h(\overline{R},\mathfrak{E}_{\rho_1}(R))$ as Banach algebras. Further, the $PU_n(\mathbb{C})$-equivalence of $\mathfrak{E}_{\rho_1}(R)$ and $\mathfrak{E}_{\rho_2}(R)$ implements a $C^*$-isomorphism of the continuous section algebras and so this Banach isomorphism of the continuous holomorphic algebras is a complete isometry.
	\end{remark}

The algebra $\Gamma_c(\overline{R},\mathfrak{E}(\overline{R}))$ is an $n$-homogeneous $C^*$-algebra with Hausdorff spectrum \cite[Thm.~5]{Tomiyama1961}. That is, all of the irreducible representations of $\Gamma_c(\overline{R},\mathfrak{E}(\overline{R}))$ are $n$-dimensional, and correspond to (representatives of the class) $\eval_r$.
We can see that the bundle $\mathfrak{E}(\overline{R})$ is an example of a continuous field of $C^*$-algebras as described in \cite[10.3.1]{dix}. Using this, we show:

\begin{lemma} The closed subalgebra $\Gamma_h(\overline{R},\mathfrak{E}(\overline{R}))$ generates $\Gamma_c(\overline{R},\mathfrak{E}(\overline{R}))$ as a $C^*$-algebra. \label{HoloSectionsGenerateCont}
	\end{lemma}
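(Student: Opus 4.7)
The plan is to let $\mathcal{A}$ denote the $C^*$-subalgebra of $\Gamma_c(\overline{R},\mathfrak{E}(\overline{R}))$ generated by $\Gamma_h(\overline{R},\mathfrak{E}(\overline{R}))$, and to show $\mathcal{A}=\Gamma_c(\overline{R},\mathfrak{E}(\overline{R}))$ by establishing two ingredients: (i) $\eval_r(\mathcal{A})=M_n(\mathbb{C})$ for every $r\in\overline{R}$, and (ii) $\mathcal{A}$ contains the scalar sections $C(\overline{R})\cdot I$, making it a $C(\overline{R})$-module. Once both hold, a standard partition-of-unity argument packages local approximations into a global one, forcing every continuous section into $\mathcal{A}$.

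Ingredient (i) will follow directly from \cref{ExistenceOfSections}: given $r\in\overline{R}$ and $A\in M_n(\mathbb{C})$, that lemma yields a continuous holomorphic section $\sigma$ with $\sigma_{U_0}(r)=A$ in a chart $U_0\ni r$. Hence $\eval_r(\Gamma_h)=M_n(\mathbb{C})$ at every point, and this obviously passes to the larger algebra $\mathcal{A}$.

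For (ii), the identity section $I$ plainly lies in $\Gamma_h$, and for any holomorphic function $f$ on the enlarged open Riemann surface $\breve{R}$ the section $f\cdot I$ restricts to an element of $\Gamma_h$. Because $\breve{R}$ is an open Riemann surface it is Stein (Behnke--Stein), so $\mathcal{O}(\breve{R})$ separates the points of the compact set $\overline{R}$; the complex Stone--Weierstrass theorem then identifies the $C^*$-algebra generated inside $C(\overline{R})$ by the restrictions of such $f$ as all of $C(\overline{R})$. Thus $C(\overline{R})\cdot I\subseteq\mathcal{A}$, and since $\mathcal{A}$ is an algebra, multiplication by $f\cdot I$ shows that $\mathcal{A}$ is closed under the $C(\overline{R})$-action $(f,\sigma)\mapsto f\sigma$.

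To finish, given $\tau\in\Gamma_c(\overline{R},\mathfrak{E}(\overline{R}))$ and $\varepsilon>0$, pick for each $r\in\overline{R}$ an element $\sigma_r\in\mathcal{A}$ with $\sigma_r(r)=\tau(r)$ using (i), together with an open $V_r\ni r$ on which $\|\tau-\sigma_r\|<\varepsilon$ in the fiberwise operator norm. A finite subcover $V_{r_1},\ldots,V_{r_k}$ of $\overline{R}$ and a subordinate continuous partition of unity $\{\phi_i\}\subset C(\overline{R})$ produce $\sum_i\phi_i\sigma_{r_i}\in\mathcal{A}$ by (ii), and a pointwise convexity estimate shows this section approximates $\tau$ within $\varepsilon$; letting $\varepsilon\to 0$ places $\tau$ in the closed algebra $\mathcal{A}$. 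The main obstacle I anticipate is verifying (ii): producing enough scalar-valued holomorphic sections to recover all of $C(\overline{R})$ requires point separation, which is where the Stein property of the surrounding open Riemann surface $\breve{R}$ enters and does the real work, rather than anything about the bundle itself.
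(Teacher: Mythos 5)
Your proof is correct, and at its core it is the same argument the paper makes: fibrewise surjectivity of the evaluations on $\Gamma_h(\overline{R},\mathfrak{E}(\overline{R}))$ (from \cref{ExistenceOfSections}) combined with the density principle for continuous fields of $C^*$-algebras. The difference is that the paper compresses the second half into a citation of \cite[10.2.3]{dix}, whereas you unpack it into an explicit partition-of-unity argument and, more importantly, you explicitly verify the hypothesis that makes that principle applicable: the $C^*$-algebra $\mathcal{A}$ generated by $\Gamma_h(\overline{R},\mathfrak{E}(\overline{R}))$ must be a $C(\overline{R})$-module, since fibrewise fullness alone does not suffice (a diagonal subalgebra $\{(a,a)\}\subseteq M_n\oplus M_n$ over a two-point base is full in each fibre but proper). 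You obtain $C(\overline{R})\cdot I\subseteq\mathcal{A}$ from the scalar holomorphic sections via Behnke--Stein and Stone--Weierstrass; one could equally well quote the paper's own identification $\mathcal{Z}(\Gamma_h(\overline{R},\mathfrak{E}(\overline{R})))\simeq A(\overline{R})$ together with the standard fact that $A(\overline{R})$ separates points of $\overline{R}$, but your route through $\mathcal{O}(\breve{R})$ is clean and self-contained. The only small point worth recording in your final step is that $t\mapsto\|\tau(t)-\sigma_r(t)\|$ is continuous (clear here from local triviality), so the neighbourhoods $V_r$ exist; with supports of the partition of unity chosen subordinate to the $V_{r_i}$, the convexity estimate $\bigl\|\tau(t)-\sum_i\phi_i(t)\sigma_{r_i}(t)\bigr\|\le\sum_i\phi_i(t)\,\|\tau(t)-\sigma_{r_i}(t)\|<\varepsilon$ closes the argument. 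In short: same strategy, with a gap in the paper's terse justification filled in rather than introduced.
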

\begin{proof} This follows from the observation that $\mathfrak{E}(\overline{R})$ is a continuous field of $C^*$-algebras and the $*$-semigroup generated by $\Gamma_h(\overline{R},\mathfrak{E}(\overline{R}))$ is a total subset in $\Gamma_c(\overline{R},\mathfrak{E}(\overline{R}))$ \cite[10.2.3]{dix}. This is immediate from \cref{ExistenceOfSections} where we show every point in every fibre is a value of a holomorphic section.
	\end{proof}

\begin{lemma} The restriction map $res : \Gamma_c(\overline{R},\mathfrak{E}(\overline{R})) \to \Gamma_c(\partial R,\mathfrak{E}(\partial R))$ induces a completely isometric isomorphism from $\Gamma_h(\overline{R},\mathfrak{E}(\overline{R}))$ to a norm-closed subalgebra of $\Gamma_c(\partial R,\mathfrak{E}(\partial R))$, vis.~the set $\{ \sigma|_{\partial R} \mid \sigma \in \Gamma_h(\overline{R},\mathfrak{E}(\overline{R})) \}$. \label{RestrictionCompletelyIsom}
	\end{lemma}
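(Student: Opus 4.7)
The plan is to verify four properties of $res$ in turn: that it is an algebra homomorphism, that it is isometric on $\Gamma_h(\overline{R},\mathfrak{E}(\overline{R}))$, that its image is norm-closed, and finally that it is a complete isometry. The first is immediate because both algebras are defined by pointwise operations and restriction commutes with pointwise addition, multiplication, and (in the $C^*$-setting) involution. Norm-closedness will follow automatically once isometry is established, since $\Gamma_h(\overline{R},\mathfrak{E}(\overline{R}))$ is itself norm-closed in the complete algebra $\Gamma_c(\overline{R},\mathfrak{E}(\overline{R}))$.

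For isometry, I would invoke the maximum modulus principle already discussed in the paragraph preceding the lemma. For $\sigma = (\sigma_U)_\mathcal{U} \in \Gamma_h(\overline{R},\mathfrak{E}(\overline{R}))$, the function $r \mapsto \|\sigma(r)\|^2$ is continuous on $\overline{R}$, and on each chart $U \subseteq R$ the function $\|\sigma_U(r)\|^2 = \|\sigma_U(r)^* \sigma_U(r)\|$ is subharmonic because $\sigma_U$ is matrix-valued holomorphic; the subharmonicity is independent of the chart since the $PU_n(\mathbb{C})$ transition functions act by $*$-automorphisms on $M_n(\mathbb{C})$ and hence preserve the operator norm fibrewise. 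Thus $\|\sigma\|^2$ is subharmonic on $R$, continuous on $\overline{R}$, and attains its supremum on $\partial R$. This gives $\|res(\sigma)\| = \|\sigma\|$; in particular $res$ is injective with norm-closed image.

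For complete isometry, I would run the same subharmonicity argument one level up. The $k$-fold matrix amplification $M_k(\Gamma_h(\overline{R},\mathfrak{E}(\overline{R})))$ identifies with the continuous holomorphic sections of the fibre bundle $M_k(\mathfrak{E}(\overline{R}))$ with fibres $M_k(M_n(\mathbb{C})) \cong M_{kn}(\mathbb{C})$ and transition functions obtained by applying $g_{UV}$ entrywise. These amplified transition functions are still implemented by conjugation by unitaries (namely $1_k \otimes u_{UV}$ for a unitary lift $u_{UV}$ of $g_{UV}$), hence are still norm-preserving $*$-automorphisms of $M_{kn}(\mathbb{C})$, so the fibrewise operator norm of an amplified section is well-defined. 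Given an amplified holomorphic section $\Sigma = (\sigma^{ij})_{i,j=1}^k$, for each fixed $v \in \mathbb{C}^{kn}$ the function $r \mapsto \|\Sigma(r)v\|^2$ is a finite sum of squared moduli of scalar holomorphic functions on $R$ and therefore subharmonic; taking the (continuous) supremum over $\|v\|=1$ preserves subharmonicity, and so $r \mapsto \|\Sigma(r)\|^2$ is subharmonic on $R$ and attains its maximum on $\partial R$. This yields $\|res \otimes \mathrm{id}_{M_k}(\Sigma)\| = \|\Sigma\|$ for every $k$, as required. The main obstacle I anticipate is purely bookkeeping: one must be careful that the amplified bundle $M_k(\mathfrak{E}(\overline{R}))$ really does sit in the same framework (flat $PU$-structure group embedded as $1_k \otimes (\cdot)$ into $PU_{kn}(\mathbb{C})$) so that the chart-independent subharmonicity argument transfers verbatim; once that is in place, the rest is a routine repetition of the scalar maximum modulus argument.
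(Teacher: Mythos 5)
Your proposal is correct and follows essentially the same route as the paper: restriction is a pointwise-defined homomorphism, and isometry at every matrix level follows from the maximum principle applied to the (chart-independent) subharmonic function $r \mapsto \|\Sigma(r)\|^2$ for amplified holomorphic sections. The only difference is that you spell out the details the paper leaves implicit, namely the unitary implementation of the amplified transition functions and the verification that $\|\Sigma(r)\|^2 = \sup_{\|v\|=1}\|\Sigma(r)v\|^2$ is subharmonic as a continuous supremum of subharmonic functions.
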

	\begin{proof} The map $res$ is a well-defined, algebra homomorphism. From our earlier remarks on the definition of the section norm, it's easy to see that the map $res|_{\Gamma_h(\overline{R},\mathfrak{E}(\overline{R}))}$ is an isomorphism of closed subspaces by the maximum modulus theorem. We need that the maps $res_n: M_n(\Gamma_h(\overline{R},\mathfrak{E}(\overline{R}))) \to M_n(\Gamma_c(\partial R,\mathfrak{E}(\partial R)))$, where $res_n({(\sigma_{ij}}_U)_\mathcal{U})=(res(({\sigma_{ij}}_U)_\mathcal{U}))$, are each isometric. But this is true since the matrix norm composed with a holomorphic section is a subharmonic function, and thus attains its supremum on the boundary.
		\end{proof}
Consequently, we write $\Gamma_h(\partial R,\mathfrak{E}(\partial R))$ for $\{ \sigma \in \Gamma_c(\overline{R},\mathfrak{E}) \mid \sigma(\partial R)=0 \}$.

\begin{defn} \cite[2.1.1]{Arv1969a} Let $\mathcal{C}$ be a unital $C^*$-algebra and $\mathcal{A}$ be a closed unital subalgebra of $\mathcal{C}$ that generates $\mathcal{C}$ as a $C^*$-algebra. Then an irreducible representation $\pi: \mathcal{C} \to B(H)$ is called a \emph{boundary representation of $\mathcal{C}$ for $\mathcal{A}$} if the only completely positive map from $\mathcal{C}$ to $B(H)$ extending $\pi|_\mathcal{A}$ is $\pi$ itself.
	\end{defn}

\begin{defn} \cite[2.2.3]{Arv1969a} Let $\mathcal{A}$ generate the $C^*$-algebra $\mathcal{C}$ as above. The \emph{Shilov boundary ideal} is the largest ideal $\mathcal{I}$ of $\mathcal{C}$ such that the quotient map from $\mathcal{C}$ onto $\mathcal{C} / \mathcal{I}$ is completely isometric when restricted to $\mathcal{A}$. We denote the Shilov boundary ideal as $\mathcal{I}(\mathcal{A})$; we will always be clear what the containing $C^*$-algebra is.
	\end{defn}
The Shilov boundary ideal always exists. We can compute $\mathcal{I}(\mathcal{A})$ with the formula $\mathcal{I}(\mathcal{A})= \bigcap \{ \ker \pi | \; \pi \text{ a boundary representation of }\mathcal{C} \text{ for }\mathcal{A} \}$. This presentation was proven by Arveson in \cite{Arv2008a} for the separable case, and by Davidson and Kennedy \cite{DavidsonKennedy2015} for the general case.

\begin{defn} \cite[2.2.4]{Arv1969a} \label{C-starEnvelope} Let $\mathcal{C}$ be a unital $C^*$-algebra and let $\mathcal{A}$ be a closed, unital subalgebra generating $\mathcal{C}$. Then the \emph{$C^*$-envelope} of $\mathcal{A}$ is the quotient $\mathcal{C}/\mathcal{I}(\mathcal{A})$.
	\end{defn}
By \cite[Thm.~2.2.5]{Arv1969a} $C^*_e(\mathcal{A})$, has the following universal property: for any other $C^*$-algebra $\mathcal{C}_1$ generated by $\mathcal{A}$, there exists a surjective $*$-homomorphism $\varphi : \mathcal{C}_1 \to C^*_e(\mathcal{A})$ so that $\varphi \circ i_{\mathcal{C}_1} = i_{C^*_e(\mathcal{A})}$. Here $i_{\mathcal{C}_1}$ and $i_{C^*_e(\mathcal{A})}$ are the inclusion maps. 

The $C^*$-envelope $C^*_e(\mathcal{A})$ is independent of the containing $C^*$-algebra $\mathcal{C}$.

One of the main tools from noncommutative boundary theory which we will need in order to compute the $C^*$-envelope is the following theorem of Arveson that relates the boundary representations of two completely isometric operator algebras.

\begin{theorem} \cite[2.1.2]{Arv1969a} \label{BoundaryRepCorrespondence} For $i=1,2$, let $B_i$ be a unital $C^*$-algebra, and let $A_i$ be a closed linear subspace of $B_i$ containing the identity. Suppose $A_i$ generates $B_i$ as a $C^*$-algebra for $i=1,2$, and suppose $\varphi : A_1 \to A_2$ is a completely isometric linear map, such that $\varphi(1)=1$. Then for every boundary representation $\omega_1$ of $B_1$ for $A_1$ there is a boundary  representation $\omega_2$ of $B_2$ such that $\omega_2 \circ \varphi = \omega_1$ on $A_1$.
	\end{theorem}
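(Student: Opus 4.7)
The plan is to transfer the boundary-representation property of $\omega_1$ across $\varphi$ by combining Arveson's extension theorem with the (now standard) characterization of boundary representations in terms of maximal UCP maps: a $*$-representation $\omega : B \to B(H)$ is a boundary representation of $B$ for $A$ iff $\omega$ is irreducible and $\omega|_{A + A^*}$ is \emph{maximal}, i.e., $\omega$ is its unique UCP extension to $B$.

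First, I would upgrade $\varphi$ to a unital complete order isomorphism $\tilde\varphi : A_1 + A_1^* \to A_2 + A_2^*$ by setting $\tilde\varphi(a + b^*) := \varphi(a) + \varphi(b)^*$; this is well-defined and completely order-isomorphic because $\varphi$ is a unital complete isometry (a standard fact from Paulsen's book). Next, pull $\omega_1|_{A_1+A_1^*}$ back via $\tilde\varphi^{-1}$ to obtain a UCP map $\psi : A_2 + A_2^* \to B(H_1)$, and apply Arveson's extension theorem to produce at least one UCP extension $\Psi : B_2 \to B(H_1)$. The candidate $\omega_2$ is then a maximal UCP extension of $\psi$, equivalently an extreme point of the BW-compact convex set of all UCP extensions of $\psi$ to $B_2$.

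What remains is to verify that $\omega_2$ is (i) a $*$-representation, (ii) irreducible, and (iii) has the unique extension property from $A_2$. For (i) and (iii), I would use that maximality is intrinsic to the operator-system structure and so is preserved by $\tilde\varphi$: since $\omega_1|_{A_1 + A_1^*}$ is maximal (by the boundary-representation hypothesis on $\omega_1$), so is $\psi$, and a maximal UCP map on an operator system extends uniquely to a $*$-representation of the generated $C^*$-algebra (Dritschel--McCullough, or implicit in Arveson). For (ii), note that $\omega_2(A_2) = \omega_1(A_1)$ generates, as a $C^*$-algebra in $B(H_1)$, the same operators as $\omega_1(B_1)$; the commutant of $\omega_1(B_1)$ is trivial by irreducibility of $\omega_1$, so the commutant of $\omega_2(A_2)$ is trivial, and hence so is that of $\omega_2(B_2)$. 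By construction, $\omega_2 \circ \varphi = \omega_1|_{A_1}$.

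The main obstacle is the maximality-transfer step in (i)/(iii); Arveson's original argument for 2.1.2 in \cite{Arv1969a} avoids the modern "maximal UCP" vocabulary and instead works directly with extreme points, showing via a Radon--Nikodym-type argument in the BW-topology that extreme points of the set of UCP extensions are automatically multiplicative on $B_2$ and satisfy the unique extension property. Either route reduces to the same core fact: a UCP map on an operator system that admits no proper dilation is already a $*$-representation of the generated $C^*$-algebra.
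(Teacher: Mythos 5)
The paper does not actually prove this statement---it is quoted from Arveson's 1969 paper as Theorem 2.1.2---so there is no internal argument to compare against; what you have written is a correct reconstruction along the now-standard ``maximality'' route (Muhly--Solel, Dritschel--McCullough, Arveson 2008) rather than Arveson's original 1969 argument. The skeleton is sound: a unital complete isometry of unital subspaces induces a unital complete order isomorphism $\tilde\varphi$ of the generated operator systems; the boundary-representation hypothesis says exactly that $\omega_1|_{A_1+A_1^*}$ has the unique extension property, hence is maximal; maximality is defined purely in terms of dilations of a UCP map on an operator system and is therefore preserved under composition with $\tilde\varphi^{-1}$; and a maximal UCP map on an operator system has a unique UCP extension to the generated $C^*$-algebra, which is a $*$-representation. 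This yields $\omega_2$ with $\omega_2|_{A_2+A_2^*}=\psi$, hence $\omega_2\circ\varphi=\omega_1$ on $A_1$. Note that the whole argument requires $\varphi$ to map $A_1$ \emph{onto} $A_2$ (so that $\tilde\varphi^{-1}$ is defined on all of $A_2+A_2^*$ and $\omega_2(A_2)=\omega_1(A_1)$); this is implicit in the statement and holds in the paper's application, but you should say it.

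Two local corrections. First, your parenthetical ``equivalently an extreme point of the BW-compact convex set of all UCP extensions'' is false as an equivalence: extreme points of the set of UCP extensions of a given UCP map need not be multiplicative or maximal (already for states on $\mathrm{span}\{1,z,\bar z\}\subseteq C(\mathbb{T})$, extreme extensions of evaluation at $0$ are measures supported on few points, none of which are characters), and this gap between ``extreme'' and ``boundary'' is precisely why results such as \cref{KleskiThm} need purity or peaking hypotheses. Fortunately your actual argument never uses this claim---it runs through maximality of $\psi$ itself, which is the right mechanism. Second, the irreducibility step has the inclusion backwards: from $\omega_2(A_2)=\omega_1(A_1)\subseteq\omega_1(B_1)$ one gets $\omega_1(B_1)'\subseteq\omega_2(A_2)'$, not the reverse, and the commutant of the non-self-adjoint set $\omega_1(A_1)$ can genuinely be larger than $\mathbb{C}I$ (think of the commutant of a unilateral shift). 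The correct one-line fix: any \emph{self-adjoint} operator commuting with $\omega_1(A_1)$ also commutes with $\omega_1(A_1)^*$, hence with $C^*(\omega_1(A_1))=\omega_1(B_1)$, hence is scalar; since $\omega_2(B_2)'$ is a von Neumann algebra contained in $\omega_2(A_2)'$, it is spanned by such self-adjoint elements and is therefore trivial.
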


The final ingredient that we need is an analogue of peak points in the commutative setting. These are called \emph{peaking representations}. We pause briefly to recall the commutative setting applied to the centers of the algebras $\Gamma_c(\overline{R},\mathfrak{E}(\overline{R}))$ and $\Gamma_h(\overline{R},\mathfrak{E}(\overline{R}))$. Sections in the center of $\Gamma_c(\overline{R},\mathfrak{E}(\overline{R}))$ will be the sections with values that lie solely in the center of $M_n(\mathbb{C})$, i.e.~$\mathcal{Z}(\Gamma_c(\overline{R},\mathfrak{E}(\overline{R})))=C(\overline{R})I_n \simeq C(\overline{R})$, and $\mathcal{Z}(\Gamma_h(\overline{R},\mathfrak{E}(\overline{R})))\simeq A(\overline{R})$, where $A(\overline{R})$ is the collection of continuous holomorphic functions from $\overline{R}$ into $\mathbb{C}$.

A \emph{peak point} of a uniform algebra $\mathcal{A} \subseteq C(X)$ is a point $x \in X$ such that there exists an $f \in \mathcal{A}$ so that $f(x)=1$ and $|f(y)|<1$ for all $y \not = x$. 
\begin{lemma} The peak points for $A(\overline{R}) \subset C(\overline{R})$ are all the points $r \in\partial R$. \label{PeakPointsForCenterAreOnBoundary}
	\end{lemma}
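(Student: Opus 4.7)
My plan is to treat interior and boundary points separately. For an interior point $r \in R$, suppose for contradiction there exists $f \in A(\overline{R})$ with $f(r) = 1$ and $|f(y)| < 1$ for all $y \neq r$. Then $|f|$ attains its maximum at the interior point $r$ of the Riemann surface $R$, so by the maximum modulus principle $f$ is constant on the connected component of $R$ containing $r$, and hence on $\overline{R}$ by connectedness and continuity. But then $f \equiv 1$, contradicting $|f(y)| < 1$ for $y \neq r$. So interior points are never peak points.

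For a boundary point $r \in \partial R$, I would aim to produce $g \in A(\overline{R})$ with $g(r) = 0$ and $\mathrm{Re}\, g > 0$ on $\overline{R} \setminus \{r\}$; then $f = e^{-g}$ has modulus $e^{-\mathrm{Re}\, g}$, equal to $1$ at $r$ and strictly less than $1$ elsewhere, so $r$ is a peak point. Building $g$ locally is easy: by the analyticity of $\partial R$ at $r$ there is a holomorphic chart on a neighborhood of $r$ in $\breve{R}$ sending $r$ to $0$, $\partial R$ to the real diameter of a small disk $\Delta$ of radius less than $1$, and $R \cap \Delta$ to the upper half-disk. In this chart the function $g_{\mathrm{loc}}(\zeta) = \zeta^2 - i\zeta$ has $\mathrm{Re}\, g_{\mathrm{loc}} = x^2 + y(1-y)$, which is strictly positive on the closed upper half-disk minus the origin and vanishes at the origin.

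The main obstacle is globalizing $g_{\mathrm{loc}}$ to a holomorphic function on $\breve{R}$ whose real part stays positive on all of $\overline{R} \setminus \{r\}$. I would do this in two steps. First, by the Behnke-Stein theorem (every noncompact Riemann surface is Stein) there is $g_0 \in \mathcal{O}(\breve{R})$ matching the $2$-jet of $g_{\mathrm{loc}}$ at $r$, and the leading-order behavior forces $\mathrm{Re}\, g_0 > 0$ on $\overline{R} \cap V \setminus \{r\}$ for some small neighborhood $V$ of $r$. Second, on the compact complement $\overline{R} \setminus V$ the function $\mathrm{Re}\, g_0$ is bounded below by some $-M$, and I would correct by adding $h \in \mathcal{O}(\breve{R})$ with $h(r) = 0$, $|h|$ uniformly small on a smaller neighborhood of $r$, and $\mathrm{Re}\, h > M$ on $\overline{R} \setminus V$, the existence of such $h$ following from Runge-type approximation on the open Riemann surface $\breve{R}$. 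Then $g = g_0 + h$ is the required function. The delicate point is calibrating the correction so that it does not destroy positivity of $\mathrm{Re}\, g$ near $r$; as a cleaner abstract alternative, I could appeal to the classical result (see, e.g., Gamelin's \emph{Uniform Algebras}) that on a finitely bordered Riemann surface the peak points of $A(\overline{R})$ are precisely the boundary points.
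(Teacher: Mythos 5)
Your argument that interior points are not peak points (via the maximum modulus principle) is correct and complete, and your overall route --- a direct construction of a peaking function at each boundary point --- is genuinely different from the paper's, which instead strings together three citations: $A(\overline{R})|_{\partial R}$ is hypo-Dirichlet (Wermer), hence $\partial R$ is the Choquet boundary of $A(\overline{R})$ (Ahern--Sarason), hence $\partial R$ is the set of peak points (Bishop--de~Leeuw, using metrizability of $\overline{R}$). However, your constructive half has a genuine gap exactly at the point you flag. The Runge-type correction controls $h$ only on the two pieces of the compact set you approximate on: $|h|$ is small on the small neighborhood $\overline{W}$ of $r$, and $\mathrm{Re}\, h > M$ on $\overline{R}\setminus V$. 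On the transition region $\overline{R}\cap (V\setminus \overline{W})$ you have no control whatsoever on $\mathrm{Re}\, h$, while $\mathrm{Re}\, g_0$ there is positive but can be arbitrarily close to $0$ (it tends to $0$ as you approach $r$, so no uniform lower bound helps unless you shrink $W$ --- but shrinking $W$ only enlarges the uncontrolled region). So $\mathrm{Re}(g_0+h)$ may well be negative somewhere in $\overline{R}\cap(V\setminus\overline{W})$, and the construction as described does not produce the required $g$. This is not a cosmetic issue: it is precisely where the multiple connectivity of $R$ enters. On the disk one writes down $(1+z)/2$ explicitly; on a multiply connected surface the standard repair is to start from a nonnegative barrier $u$ on $\partial R$ vanishing only at $r$, harmonically extend, and then kill the periods of the harmonic conjugate using the harmonic measures of the boundary curves while preserving nonnegativity --- a period/harmonic-measure argument that is exactly what the hypo-Dirichlet machinery the paper invokes is packaging.

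Your fallback --- citing the classical fact (Gamelin, or the Wermer/Ahern--Sarason/Bishop--de~Leeuw chain) that the peak points of $A(\overline{R})$ on a finitely bordered surface are the boundary points --- is legitimate and is essentially what the paper itself does; if you take that route the proof is fine, but then the constructive sketch should be dropped or completed. As written, the two-step jet-plus-Runge correction does not close.
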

	\begin{proof} $A(\overline{R})|_{\partial R}$ is a hypo-Dirichlet algebra in the sense of Wermer \cite[pg.~162]{Wermer1964} and so $\partial R$ is the Choquet boundary of $A(\overline{R})$ by \cite[Thm.~3.1, Cor.~2]{AhernSarason1967}. Thus by \cite[Thm.~6.5]{Bishop1959}, $\partial R$ is the set of peak points for $A(\overline{R})$.
		\end{proof}

\begin{defn} \cite[Def.~7.1]{Arv2011} Let $\mathcal{C}$ be a $C^*$-algebra and $\mathcal{A}$ be a closed unital operator algebra that generates $\mathcal{C}$ as a $C^*$-algebra. Then an irreducible representation $\pi : \mathcal{C} \to B(H)$ is called \emph{peaking for $\mathcal{A}$} if there is an integer $n$ and $(a_{ij}) \in M_n(\mathcal{A})$ so that for any other irreducible representation $\omega$ of $\mathcal{C}$ not equivalent to $\pi$, $\|(\pi(a_{ij}))\|_n > \|(\omega(a_{ij}))\|_n$. In this case, we say that $\pi$ \emph{peaks at $(a_{ij})$}.
	\end{defn}
	While peaking representations were originally defined for operator systems rather than operator algebras, peaking in this sense implies peaking in the sense of \cite{Arv2008a} by \cite[Prop.~1.2.8]{Arv1969a}.
	We care about peaking representations because they are concrete examples of boundary representations:

\begin{theorem} \cite[Thm.~3.1, Rmk.~3.5]{Kleski2014} Let $\mathcal{A}$ be a separable operator algebra. For any $(a_{ij}) \in M_n(\mathcal{A})$, there is a boundary representation $\pi$ for $\mathcal{A}$ so that $\| (\pi (a_{ij})) \|= \|(a_{ij}) \|$. Thus every peaking representation is a boundary representation. \label{KleskiThm}
	\end{theorem}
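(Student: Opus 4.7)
The statement has two parts: an attainment assertion that some boundary representation $\pi$ of $\mathcal{C}$ for $\mathcal{A}$ achieves $\|(\pi(a_{ij}))\| = \|(a_{ij})\|$ for each prescribed $(a_{ij}) \in M_n(\mathcal{A})$, and the corollary that peaking representations are boundary representations. The plan is to prove the attainment assertion directly and deduce the corollary from it in two lines.

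For the attainment assertion I would combine three ingredients: the Davidson--Kennedy extension of Arveson's boundary theorem (guaranteeing ``enough'' boundary representations for separable operator algebras), the Dritschel--McCullough theorem that every UCP map on an operator system has a maximal UCP dilation, and Arveson's theorem that a maximal UCP map on the ambient $C^*$-algebra must be a $*$-representation with the unique extension property (UEP) for $\mathcal{A}$. Write $c := \|(a_{ij})\|$. First I would produce an irreducible representation $\sigma_0 : \mathcal{C} \to B(H_0)$ with $\|(\sigma_0(a_{ij}))\| = c$; since $c$ is the supremum of $\|(\sigma(a_{ij}))\|$ over irreducible $\sigma$, separability of $\mathcal{C}$ forces this supremum to be attained via a weak-$*$ compactness plus extreme-point argument on pure states. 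Next I would apply Dritschel--McCullough to dilate $\sigma_0|_{\mathcal{A}}$ to a maximal UCP map $\psi : \mathcal{C} \to B(H)$; by Arveson's maximality theorem $\psi$ is automatically a $*$-representation with UEP for $\mathcal{A}$, and since the matrix norm at $(a_{ij})$ is nondecreasing under UCP dilation, $\|(\psi(a_{ij}))\| \ge c$, hence equality holds.

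The main obstacle is converting $\psi$ into an \emph{irreducible} representation without losing either the norm attainment or the UEP, since the definition of boundary representation requires irreducibility. This is precisely where separability of $\mathcal{A}$ (and hence of $\mathcal{C}$) is essential. I would invoke the direct integral decomposition $\psi = \int^\oplus \pi_t\, d\mu(t)$ over the spectrum of the commutant, and then argue on two fronts: first, that UEP descends to $\pi_t$ for $\mu$-a.e.\ $t$ (otherwise one could assemble a nontrivial UCP extension of $\psi|_{\mathcal{A}}$ by integrating local counterexamples against $\psi$ on a positive-measure set), and second, that the set $E := \{t : \|(\pi_t(a_{ij}))\| = c\}$ has positive measure (since the essential supremum of $\|(\pi_t(a_{ij}))\|$ computes $\|(\psi(a_{ij}))\| = c$ while each $\|(\pi_t(a_{ij}))\| \le c$). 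Any $\pi_{t_0}$ with $t_0 \in E$ inheriting UEP is then the desired boundary representation.

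For the corollary, suppose $\pi$ peaks at $(a_{ij})$. The attainment assertion supplies a boundary representation $\pi'$ with $\|(\pi'(a_{ij}))\| = \|(a_{ij})\| \ge \|(\pi(a_{ij}))\|$; the peaking hypothesis states $\|(\omega(a_{ij}))\| < \|(\pi(a_{ij}))\|$ for every irreducible $\omega \not\sim \pi$, which rules out $\pi' \not\sim \pi$. Hence $\pi' \sim \pi$, and so $\pi$ is itself a boundary representation.
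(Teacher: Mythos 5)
The paper does not actually prove this theorem; it imports it verbatim from Kleski \cite[Thm.~3.1, Rmk.~3.5]{Kleski2014}, so your proposal has to stand on its own. Your first two steps and your deduction of the peaking corollary are fine: the norm of $(a_{ij})$ is attained at some irreducible representation $\sigma_0$ of $\mathcal{C}$ (a pure state of $M_n(\mathcal{C})$ maximizing $(a_{ij})^*(a_{ij})$ gives this, with no need for separability), a maximal dilation $\psi$ of $\sigma_0|_{\mathcal{A}}$ is a $*$-representation with the unique extension property, $\|\psi_n((a_{ij}))\|=\|(a_{ij})\|$ by squeezing between the dilation inequality and complete contractivity, and the peaking corollary does follow formally from the attainment statement exactly as you say.

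The gap is in your step 3, and it sits at the crux of why Kleski's theorem needed a proof at all. From $\operatorname{ess\,sup}_t \|(\pi_t(a_{ij}))\| = c$ together with $\|(\pi_t(a_{ij}))\| \le c$ you cannot conclude that $E=\{t : \|(\pi_t(a_{ij}))\| = c\}$ has positive measure: a bounded measurable function can have essential supremum $c$ while lying strictly below $c$ almost everywhere. (Already for $\mathcal{C}=C[0,1]$, $\psi$ the multiplication representation on $L^2[0,1]$ and the element $1-t$, the fibrewise norm $1-t$ attains its essential supremum $1$ only on a null set.) This is exactly why Arveson's 2008 result---that the direct integral of all boundary representations is completely isometric, so the \emph{supremum} over boundary representations equals $\|(a_{ij})\|$---does not by itself yield attainment. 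Your parenthetical justification that the unique extension property descends to almost every $\pi_t$ also glosses over the measurable-selection difficulties, but that statement at least is a citable theorem of Arveson; the positive-measure claim for $E$ is simply false as an inference. The repair, which is Kleski's actual route, is to avoid disintegration altogether by securing irreducibility at the outset through purity: a pure state of $M_n(\mathcal{C})$ maximizing $(a_{ij})^*(a_{ij})$ produces a \emph{pure} UCP map attaining the norm; pure UCP maps admit \emph{pure} maximal dilations, and a pure maximal UCP map is an irreducible representation with the unique extension property---a boundary representation---which still attains the norm because dilation can only increase it.
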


We are now able to prove \cref{BoundaryRepsAreEval} from the introduction:

\begin{proof}[Proof of \cref{BoundaryRepsAreEval}]
We first show $\eval_{t \in \partial R}$ is a boundary representation of $\Gamma_c(\overline{R},\mathfrak{E}(\overline{R}))$ for $\Gamma_h(\overline{R},\mathfrak{E}(\overline{R}))$.

The points $t \in \partial R$ are peak points for $\mathcal{Z}(\Gamma_h(\overline{R},\mathfrak{E}(\overline{R})))\simeq A(\overline{R})$. Therefore there exists $(\sigma_U)_\mathcal{U} \in \mathcal{Z}(\Gamma_h(\overline{R},\mathfrak{E}(\overline{R})))$ such that $\eval_t(\sigma)$ and $||\eval_v(\sigma)||<1$ for all $v \not =t$.
	View $(\sigma_U)_{\mathcal{U}}$ as a $1\times 1$ matrix over $\Gamma_h(\overline{R},\mathfrak{E}(\overline{R}))$. Then $\eval_t$ is a peaking representation because any other irreducible representation of $\Gamma_c(\overline{R},\mathfrak{E}(\overline{R}))$ is also a point evaluation at some $s$.
	Thus $\eval_{t \in \partial R}$ is a boundary representation of $\Gamma_c(\overline{R},\mathfrak{E}(\overline{R}))$ for $\Gamma_h(\overline{R},\mathfrak{E}(\overline{R}))$ by \cref{KleskiThm}.

For the converse direction, first observe that, by \cref{RestrictionCompletelyIsom}, $\Gamma_h(\overline{R},\mathfrak{E}(\overline{R})) \simeq \Gamma_h(\partial R,\mathfrak{E}(\partial R))$. Then \cref{BoundaryRepCorrespondence} implies that for each boundary representation $\nu$ of $\Gamma_c(\overline{R},\mathfrak{E}(\overline{R}))$ for $\Gamma_h(\overline{R},\mathfrak{E}(\overline{R}))$, there is a boundary representation $\nu_1$ of $\Gamma_c(\partial R,\mathfrak{E}(\partial R))$ for $\Gamma_h(\partial R,\mathfrak{E}(\partial R))$, such that $\nu_1 \circ res (\sigma_U)= \nu(\sigma_U)$ for all $(\sigma_U) \in \Gamma_h(\overline{R},\mathfrak{E}(\overline{R}))$. But the $C^*$-representations of $\Gamma_c(\overline{R},\mathfrak{E}(\overline{R}))$ and $\Gamma_c(\partial R,\mathfrak{E}(\partial R))$ are all point evaluations, so we must conclude that $\nu$ is evaluation in $\partial R$, as well.
	\end{proof}

\begin{corollary} The noncommutative Shilov boundary of $\Gamma_h(\overline{R},\mathfrak{E}(\overline{R}))$ for $\Gamma_c(\overline{R},\mathfrak{E}(\overline{R}))$ is $\{ \sigma \in \Gamma_c(\overline{R},\mathfrak{E}) \mid \sigma(\partial R)=0 \}$, and the $C^*$-envelope is $\Gamma_c(\partial R,\mathfrak{E}(\partial R))$.
	\end{corollary}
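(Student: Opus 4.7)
The plan is to extract both statements directly from \cref{BoundaryRepsAreEval} via the formula $\mathcal{I}(\mathcal{A}) = \bigcap \{ \ker \pi \mid \pi \text{ a boundary representation of } \mathcal{C} \text{ for } \mathcal{A}\}$ recorded in the paper (due to Arveson and to Davidson--Kennedy). Since \cref{BoundaryRepsAreEval} identifies the boundary representations as the evaluations $\eval_t$ with $t \in \partial R$, the kernel of each such representation consists of those continuous sections vanishing at $t$, and intersecting over all $t \in \partial R$ yields precisely $\{\sigma \in \Gamma_c(\overline{R},\mathfrak{E}(\overline{R})) \mid \sigma(\partial R) = 0\}$. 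This is the first assertion.

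For the second assertion, I would consider the restriction map $\mathrm{res}: \Gamma_c(\overline{R},\mathfrak{E}(\overline{R})) \to \Gamma_c(\partial R,\mathfrak{E}(\partial R))$ introduced in \cref{RestrictionCompletelyIsom}. It is a $*$-homomorphism whose kernel is exactly the Shilov boundary ideal just computed. It then remains to show that $\mathrm{res}$ is surjective; once surjectivity is in hand, the first isomorphism theorem gives $\Gamma_c(\overline{R},\mathfrak{E}(\overline{R}))/\mathcal{I}(\Gamma_h(\overline{R},\mathfrak{E}(\overline{R}))) \cong \Gamma_c(\partial R, \mathfrak{E}(\partial R))$, which by \cref{C-starEnvelope} is $C^*_e(\Gamma_h(\overline{R},\mathfrak{E}(\overline{R})))$.

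The main (small) obstacle is surjectivity of the restriction map. I would establish this by a standard partition-of-unity / Tietze extension argument adapted to coordinate bundles: given a continuous section $(\tau_{U'})$ of $\mathfrak{E}(\partial R)$ relative to the restricted cover $\mathcal{U}' = \{U \cap \partial R : U \in \mathcal{U}\}$, the compactness of $\overline{R}$ allows one to refine to a finite cover, extend each component $\tau_{U'}: U \cap \partial R \to M_n(\mathbb{C})$ componentwise to a continuous function $\widetilde{\tau}_U : U \to M_n(\mathbb{C})$ by Tietze, and then splice the local extensions together using a partition of unity subordinate to $\mathcal{U}$ (using that $M_n(\mathbb{C})$ is a vector space so convex combinations make sense) and the compatibility relations $\sigma_U g_{UV} = \sigma_V$, which hold on $\partial R$ and are preserved under such a convex combination within each double overlap. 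An alternative would be to invoke the general result that continuous sections of a locally trivial fibre bundle with contractible / vector-space fibre over a compact Hausdorff space extend from a closed subspace; either way, surjectivity follows, and the corollary is proved.
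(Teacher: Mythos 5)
Your proposal is correct and follows essentially the same route as the paper: the Shilov ideal is computed as the intersection of the kernels of the boundary representations $\eval_t$, $t \in \partial R$, identified in \cref{BoundaryRepsAreEval}, and the $C^*$-envelope is then the quotient by that ideal, identified with $\Gamma_c(\partial R,\mathfrak{E}(\partial R))$ via the restriction map. The one place you go beyond the paper is in explicitly justifying surjectivity of $\mathrm{res}$ (via Tietze extension and a partition of unity, which works because the transition functions act linearly on the fibre $M_n(\mathbb{C})$); the paper's map $\varphi$ is only defined on restricted sections and tacitly assumes this extension property, so your added detail is a legitimate, correctly executed refinement rather than a departure.
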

\begin{proof}
We know that $\mathcal{I}(\Gamma_h(\overline{R},\mathfrak{E}(\overline{R})))$ is the intersection of the kernels of boundary representations, which will be the intersection of the kernels of $\eval_t$, $t \in \partial R$. This is precisely the collection of sections $\sigma \in \Gamma_c(\overline{R},\mathfrak{E}(\overline{R}))$ with $\sigma(\partial R)=0$; we denote this set $\Gamma_{c,0}(\overline{R},\mathfrak{E})$. Thus $\displaystyle C^*_e(\Gamma_h(\overline{R},\mathfrak{E}(\overline{R})))= \Gamma_c(\overline{R},\mathfrak{E}(\overline{R})) / \Gamma_{c,0}(\overline{R},\mathfrak{E}(\overline{R}))$, and $C^*_e(\Gamma_h(\overline{R},\mathfrak{E}(\overline{R})))$ is $*$-isomorphic to $ \Gamma_c(\partial R,\mathfrak{E}(\partial R))$ via the map $\varphi : \Gamma_c(\partial R,\mathfrak{E}(\partial R)) \to\Gamma_c(\overline{R},\mathfrak{E}(\overline{R})) / \Gamma_{c,0}(\overline{R})$, where $\varphi((\sigma_U|_{\partial R})_{\mathcal{U}}) = (\sigma_U)_\mathcal{U} + \Gamma_{c,0}(\overline{R})$.
	\end{proof}

	\end{section}

\begin{section}{Azumaya algebras} \label{AzumayaAlgebrasSection}

A unital algebra $\mathcal{A}$ is called an \emph{Azumaya} $\mathcal{Z}$-algebra if the center of $\mathcal{A}$ is $\mathcal{Z}$, $\mathcal{A}$ is a finitely generated projective $\mathcal{Z}$-module, and $\mathcal{A} \odot_{\mathcal{Z}} \mathcal{A}^{opp} \simeq \End_\mathcal{Z} \mathcal{A}$. Here $\odot_\mathcal{Z}$ is the algebraic $\mathcal{Z}$-module tensor product balanced over $\mathcal{Z}$, and the isomorphism is given by the formula $a \odot_Z b \to (c \mapsto acb)$. Azumaya algebras are also called central separable algebras, and as that name suggests they are generalizations of central simple algebras. There are several equivalent formulations of Azumaya algebras, and we will be interested in the one described in \cite{Procesi1972} (\cref{ArtinProcesiTheorem} below), with which it is much easier to do computations.

A \emph{polynomial identity of degree $d$ for $M_n(\mathbb{C})$} is an element $p$ of the (unital) free algebra on $d$ generators so that for any $d$-tuple $(A_1, \ldots, A_d) \in M_n(\mathbb{C})^d$, we have $p(A_1, \ldots, A_d)=0$ in $M_n(\mathbb{C})$. The algebra of \emph{$d$-generic $n \times n$ matrices} is the quotient of the free algebra on $d$ generators by the ideal of polynomial identities for the $n \times n$ matrices. A \emph{central polynomial} is an element of the $d$-generic $n \times n$ matrices which is not a polynomial identity for the $n \times n$ matrices, and yields an element in the center of $M_n(\mathbb{C})$ when evaluated at any $d$-tuple in $M_n(\mathbb{C})^d$.

\begin{defn} Let $\mathcal{A}$ be a unital ring that satisfies all polynomial identities of the $n \times n$ matrices. The Formanek center of $\mathcal{A}$, $\mathcal{F}(\mathcal{A})$, is the subset of the center of $\mathcal{A}$ obtained by evaluating the central polynomials with no constant term (considered as an elements of the free algebra on $d$ generators), at elements in $\mathcal{A}^d$.
	\end{defn}
The following theorem is a combination of two theorems, first \cite[Thm.~8.3]{Artin1969} of Artin, and second \cite[Thm.~3.2]{Procesi1972} of Procesi.

\begin{theorem}[Artin-Procesi Theorem] Let $\mathcal{A}$ be a unital ring that satisfies all the polynomial identities of the $n \times n$ matrices. Then $\mathcal{A}$ is an Azumaya algebra iff $\mathcal{A}=F(\mathcal{A}) \mathcal{A}$. \label{ArtinProcesiTheorem}
	\end{theorem}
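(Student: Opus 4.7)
The plan is to prove both directions by reduction to the local-global principle for Azumaya algebras: that $\mathcal{A}$ is Azumaya over its center $\mathcal{Z} = \mathcal{Z}(\mathcal{A})$ of the relevant rank if and only if, for every maximal ideal $\mathfrak{m} \subset \mathcal{Z}$, the fiber $\mathcal{A}/\mathfrak{m}\mathcal{A}$ is a central simple $\mathcal{Z}/\mathfrak{m}$-algebra of degree $n$. The bridge between this local condition and the Formanek center will be Formanek's theorem on the existence of multilinear central polynomials for $M_n(\mathbb{C})$ with no constant term. Observe at the outset that $\mathcal{A} = \mathcal{F}(\mathcal{A})\mathcal{A}$ is equivalent to $1 \in \mathcal{F}(\mathcal{A})\mathcal{Z}$, since $\mathcal{F}(\mathcal{A}) \subseteq \mathcal{Z}$, so the condition really says that the Formanek center has no common zeros in $\mathrm{Spec}\,\mathcal{Z}$.

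First I would handle the forward direction. Assume $\mathcal{A}$ is Azumaya, and fix a maximal ideal $\mathfrak{m} \subset \mathcal{Z}$. By the local-global principle, $\mathcal{A}/\mathfrak{m}\mathcal{A}$ is central simple of degree $n$ over $\mathcal{Z}/\mathfrak{m}$, and after a finite algebraic base change it becomes $M_n$ over a field. Formanek's theorem supplies a central polynomial $p(x_1,\ldots,x_d)$ with no constant term and a $d$-tuple of matrices on which $p$ takes a nonzero scalar value. Lifting this tuple to elements $a_1,\ldots,a_d \in \mathcal{A}$, the element $p(a_1,\ldots,a_d) \in \mathcal{F}(\mathcal{A})$ is nonzero modulo $\mathfrak{m}$, so $\mathcal{F}(\mathcal{A}) \not\subseteq \mathfrak{m}$. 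Since $\mathfrak{m}$ was arbitrary, $\mathcal{F}(\mathcal{A})$ lies in no maximal ideal of $\mathcal{Z}$, whence it generates the unit ideal.

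For the converse, I would localize and patch. Suppose $1 = \sum_{i=1}^k z_i w_i$ with $z_i \in \mathcal{F}(\mathcal{A})$ and $w_i \in \mathcal{Z}$. Because being Azumaya is a local property on $\mathrm{Spec}\,\mathcal{Z}$, it suffices to show each localization $\mathcal{A}_{z_i}$ is Azumaya over $\mathcal{Z}_{z_i}$. Fix one such $z = p(a_1,\ldots,a_d)$ and any maximal ideal $\mathfrak{n} \subset \mathcal{Z}_z$. The fiber $\mathcal{A}_z/\mathfrak{n}\mathcal{A}_z$ is a PI-algebra satisfying all identities of $M_n$, so by the Kaplansky--Posner structure theory it is central simple of degree at most $n$. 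The essential point is that in this fiber the image of $z$ is a unit and equals a central polynomial evaluation on the images of the $a_i$; since any central polynomial of $M_n$ vanishes identically on $M_m$ for all $m < n$, the fiber cannot have degree strictly less than $n$, and thus has degree exactly $n$. The local-global principle then makes $\mathcal{A}_z$ Azumaya, and patching over the $z_i$ does the rest.

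The principal obstacle is the sharpness statement at the heart of the converse: central polynomials of $M_n$ must vanish on every smaller full matrix algebra. This, together with the structure theorem that identifies PI-algebras satisfying the identities of $M_n$ as central simple of degree at most $n$ over their center, is what distinguishes the theorem from the straightforward easy direction and is the content of Procesi's contribution; once these facts are admitted, the argument is a routine localization and patching exercise on $\mathrm{Spec}\,\mathcal{Z}$.
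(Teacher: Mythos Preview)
The paper does not prove this theorem at all: it is quoted as a known result, attributed to \cite[Thm.~8.3]{Artin1969} and \cite[Thm.~3.2]{Procesi1972}, and then invoked as a black box in the proof of \cref{HoloSectionsAlgIsAzumaya}. So there is no ``paper's own proof'' to compare against; your write-up is a sketch of the classical literature argument rather than a reconstruction of anything the author does.

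As a sketch of the Artin--Procesi argument your outline is in the right spirit, but the converse direction is too quick at the crucial step. You assert that the fiber $\mathcal{A}_z/\mathfrak{n}\mathcal{A}_z$ is automatically central simple of degree at most $n$ ``by Kaplansky--Posner''. That is not what Kaplansky--Posner gives you: those theorems apply to \emph{primitive} (or prime) PI-rings, not to an arbitrary quotient by an extended maximal ideal of the center. The honest argument passes through a simple (or prime) quotient of $\mathcal{A}$ lying over $\mathfrak{n}$, applies Kaplansky there to get degree $\le n$, uses the vanishing of central polynomials on smaller matrix algebras to force degree exactly $n$, and then uses this to conclude that $\mathfrak{n}\mathcal{A}_z$ was already maximal (so the fiber was simple after all). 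Your sketch inverts this logic by assuming simplicity up front. Similarly, in the forward direction you lift a matrix tuple from a splitting field back to $\mathcal{A}$; one should check that the relevant central polynomial can be taken multilinear so that the base-change and lifting behave, which you gesture at but do not make explicit. None of this is fatal to the strategy, but if you intend to include a self-contained proof rather than a citation, these gaps need to be closed.
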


We are now able to prove \cref{HoloSectionsAlgIsAzumaya}:

\begin{proof}[Proof of \cref{HoloSectionsAlgIsAzumaya}]
Let $\mathcal{U}$ be an open cover of $\breve{R}$, which, along with a cocycle in $H^1(\mathcal{U},\mathcal{PU}_n(\mathbb{C})_h)$, determines the bundle $\mathfrak{E}(\overline{R})$. It's clear that any section $(\sigma_U)_\mathcal{U}$ of the matrix $PU_n(\mathbb{C})$-bundle $\mathfrak{E}(\overline{R})$ will satisfy all polynomial identities of $M_n(\mathbb{C})$, so we check the second condition of \cref{ArtinProcesiTheorem}.

If we can find an integer $N$, elements $f_i \in F(\mathcal{A})$, $i=1,...,N$, and elements $g_i \in \mathcal{Z}$, $i=1,...,N$, so that $ \sum_{i=1}^N f_i g_i =1$, then we will have proven $\mathcal{A}$ is Azumaya. By \cite[Thm.~6.1]{Arens1958} and the compactness of $\overline{R}$, it's enough to find a collection of elements $f_i \in \mathcal{F}(\mathcal{A})$ with no common zero. We remind the reader that an element $f \in \mathcal{F}(\mathcal{A})$ is given by the following formula: for an open set $U\subseteq \overline{R}$, $r \in U$, and some central polynomial $c$, $f(r)=:\eval_{\mathcal{U},U,r}(f)=c(\sigma_{U,1}(r),\ldots, \sigma_{U,{K}}(r))$ for some tuple of continuous holomorphic sections $(\sigma_{U,i})_\mathcal{U} \in \mathcal{A}$, $i=1,\ldots K$. The value of $f(r)$ is independent of the open set $U$ since a central polynomial takes values in the center of $M_n(\mathbb{C})$, $\mathbb{C}I_n$, which is unaffected by the transition functions.

Formanek \cite{Formanek1972} and Razmyslov \cite{Razmyslov1973} independently discovered a special central polynomial $c_n$ for $M_n(\mathbb{C})$. Each polynomial $c_n$ is of $2n+1$ matrix variables $c_n=c_n(X_1,\cdots,X_{2n+1})$, $c_n$ has no constant term, and $c_n$ is not identically 0. More specifically, the central polynomial $c_n$ evaluates to 1 at a $(2n+1)$-tuple of matrices where the entries are chosen to be a particular sequence of matrix units $\{E_{ij} \mid 1 \leq i,j \leq n \}$. Thus, if for each $r \in \overline{R}$ and neighborhood $U$ containing $r$ we can construct a holomorphic section $(\sigma_U)_\mathcal{U}$ of $\mathfrak{E}(\breve{R})$ so that $\eval_{\mathcal{U},U,r}(\sigma_U) = E_{ij}$, then we can build an element $f_r \in \mathcal{F}(\mathcal{A})$ with $f_r(r)=1$. But the existence of such sections is guaranteed by \cref{ExistenceOfSections}. To conclude our proof, we use the elements $f_r$ to build a collection of elements of $\mathcal{F}(\mathcal{A})$ with no common zero. This is done as follows.

The elements $f_r$ are holomorphic on $\breve{R}$, and so have finitely many zeros in $\overline{R}$. Pick $f_{r_0}$ with zeros $x_{01}, x_{02}, \ldots, x_{0n_0}$. We then sequentially pick elements $f_{r_i}=f_{x_{0i}}$, which by definition satisfy $f_{x_{0i}}(x_{0i})=1 \not = 0$. Thus this collection of elements in $\mathcal{F}(\mathcal{A})$ has no common zero, and we are done.
	\end{proof}

\begin{corollary} The two-sided ideals of $\Gamma_h(\overline{R},\mathfrak{E}(\overline{R}))$ are in 1-1 correspondence with the ideals of $A(\overline{R})$.
	\end{corollary}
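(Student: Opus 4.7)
The plan is to derive the corollary immediately from \cref{HoloSectionsAlgIsAzumaya} by invoking the classical ideal correspondence for Azumaya algebras. Let $\mathcal{A} := \Gamma_h(\overline{R},\mathfrak{E}(\overline{R}))$ and recall that its center has already been identified in \cref{BoundaryRepsSection} as $\mathcal{Z}(\mathcal{A}) \simeq A(\overline{R})$. I would define the two maps
\[
\Phi : I \longmapsto I \cap \mathcal{Z}(\mathcal{A}), \qquad \Psi : J \longmapsto J \mathcal{A}
\]
between two-sided ideals of $\mathcal{A}$ and ideals of $A(\overline{R})$, and show they are mutually inverse. Well-definedness of $\Phi$ is automatic, while $\Psi(J)$ is a two-sided ideal since the generators of $J$ are central.

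The identity $\Phi \circ \Psi = \mathrm{id}$, i.e.~$(J\mathcal{A}) \cap \mathcal{Z}(\mathcal{A}) = J$, is routine given that $\mathcal{A}$ is a finitely generated projective (hence faithfully flat) $\mathcal{Z}(\mathcal{A})$-module, which is part of the definition of an Azumaya algebra. The substantive content is the identity $\Psi \circ \Phi = \mathrm{id}$, namely $(I \cap \mathcal{Z}(\mathcal{A}))\mathcal{A} = I$ for every two-sided ideal $I$; this is where the Azumaya hypothesis is genuinely needed.

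For this step I would invoke the isomorphism $\mathcal{A} \otimes_{\mathcal{Z}(\mathcal{A})} \mathcal{A}^{opp} \simeq \End_{\mathcal{Z}(\mathcal{A})}(\mathcal{A})$ built into the definition of Azumaya: two-sided ideals of $\mathcal{A}$ are the same as $\mathcal{A}$-bimodule submodules, equivalently $\End_{\mathcal{Z}(\mathcal{A})}(\mathcal{A})$-submodules of $\mathcal{A}$; then by Morita equivalence (applicable since $\mathcal{A}$ is a faithful finitely generated projective $\mathcal{Z}(\mathcal{A})$-module) these correspond to ideals of $\mathcal{Z}(\mathcal{A})$. Alternatively, a short self-contained argument using the machinery of this paper proceeds from the decomposition $1 = \sum_i f_i g_i$ of \cref{HoloSectionsAlgIsAzumaya}: for each $a \in I$, the multilinearity of the Formanek-Razmyslov central polynomial $c_n$ in each argument allows the product $f_i \cdot a$ to be rewritten as a $\mathcal{Z}(\mathcal{A})$-linear combination of values $c_n(\ldots, a, \ldots) \in I \cap \mathcal{Z}(\mathcal{A})$, so summing yields $a \in (I \cap \mathcal{Z}(\mathcal{A}))\mathcal{A}$.

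The main obstacle is precisely this second identity. Although the correspondence is classical, the cleanest route in a final writeup is to cite the standard ideal correspondence for Azumaya algebras from a reference such as DeMeyer-Ingraham or Knus-Ojanguren, rather than reproducing the Morita-theoretic or multilinear-substitution argument in detail.
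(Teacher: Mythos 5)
Your proposal is correct and matches the paper's approach: the paper proves this corollary simply by citing the standard ideal correspondence for Azumaya algebras (DeMeyer--Ingraham, Cor.~3.7), which is exactly the route you recommend at the end, with the center already identified as $A(\overline{R})$ in \cref{BoundaryRepsSection}. Your sketch of the contraction/extension maps $\Phi$ and $\Psi$ and why the Azumaya hypothesis makes them mutually inverse is a correct elaboration of that cited result, just more detail than the paper supplies.
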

\begin{proof} See, for example, \cite[Cor.~3.7]{DeMeyer1971}.
	\end{proof}

There is a Banach-algebraic notion of Azumaya algebra \cite{Craw1983}, which coincides with the purely algebraic definition when the algebras in question are unital. Thus we also have $\Gamma_h(\overline{R},\mathfrak{E}(\overline{R}))$ is a Banach Azumaya algebra in the sense of \cite[Prop.~2.6, Thm.~2.11]{Craw1983}.

	\end{section}

\begin{section}{Appendix}
For a $C^\infty$ real manifold \cite[Sec.~2]{Kobayashi1987}, or a Riemann surface \cite[Sec.~6]{Gunning1966}, one can define a flat vector bundle as a vector bundle which has a coordinate representative with locally constant transition functions. This is equivalent to the following perspective. Let \underline{$GL_n(\mathbb{C})$} be the constant sheaf over $S$, $\mathcal{GL}_n(\mathbb{C})_h$ the sheaf of germs of holomorphic $GL_n(\mathbb{C})$-valued functions, $i : \mbox{\underline{$GL_n(\mathbb{C})$}} \to \mathcal{GL}_n(\mathbb{C})_h$ the inclusion of sheaves, and $i_*: H^1(S,\mbox{\underline{$GL_n(\mathbb{C})$}}) \to H^1(S,\mathcal{GL}_n(\mathbb{C})_h)$ the induced map. Then a vector bundle with locally constant transition functions can be regarded as an element in the image of $i_*$ (See, for example, \cite[Sec.~6]{Gunning1966}).
	\end{section}

\begin{proof}[Proof of \cref{FlatPrincipalBundleAsRep}] Let's first consider the holomorphic case. In \cite[Lem.~27]{Gunning1967}, Gunning gives a 1-1 correspondence between the sets $\Hom(\pi_1(S),U_n)/U_n$ and $H^1(S,\mbox{\underline{$U_n(\mathbb{C})$}})$. The same argument follows for the group $PU_n(\mathbb{C})$. Gunning's proof quite clearly and visually connects an element of $\Hom(\pi_1(S),G)/G$ with a coordinate bundle associated to a particular open cover $\mathcal{U}$ of $S$. The continuous case is addressed in \cite[Thm.~13.9]{Steenrod1951}. For a discrete group $G$, Steenrod outlines the 1-1 correspondence between elements of $\Hom(\pi_1(S),G)/G$ and elements in $H^1(S,\mbox{\underline{$G$}})$, for any topological space $S$ that is locally compact, Hausdorff, second-countable, path-connected, locally path-connected, and semilocally simply connected.
	\end{proof}

\begin{proof}[Proof of \cref{FlatMatrixBundleAsRep}] The map $i_* : H^1(S,\mbox{\underline{$PU_n (\mathbb{C})$}}) \to H^1(S,\mathcal{PU}_n(\mathbb{C})_h)$ will be injective (in fact, an isomorphism), since an equivalence between coordinate bundles, viewed as elements in $Z^1(S,\mathcal{PU}_n(\mathbb{C})_h)$, must be implemented by a cochain $(\lambda_U)_\mathcal{U}$ in which each $\lambda_U$ is holomorphic and $PU_n(\mathbb{C})$-valued -- that is, locally constant.
	\end{proof}	
	
\begin{proof}[Proof of \cref{ExtensionOfBundleToRegularRegion}]
We follow the proof in \cite[pg.~306]{Widom1971}, substituting \cref{FlatMatrixBundleAsRep} for the analogous correspondence for vector bundles used by Widom.
	\end{proof}

\begin{section}{Acknowledgement}
The author would like to thank Paul Muhly for his incisive and insightful feedback, as well as his lively encouragement, during the preparation of this work.
	\end{section}
	
\def\cprime{$'$}

\bibliographystyle{plain}

	\end{document}